\theoremstyle{plain}
\newtheorem{theorem}{Theorem}[section]
\newtheorem{lemma}{Lemma}[section]
\title[Complete minimal hypersurfaces]
{Complete minimal hypersurfaces \\ in a hyperbolic space $\mathbf H^4(-1)$}
\author{Qing-Ming Cheng and Yejuan Peng}
\address{Qing-Ming Cheng \\ Department of Applied Mathematics, \newline
\indent Faculty of Science, Fukuoka  University, \newline
\indent 814-0180, Fukuoka,  Japan, cheng@fukuoka-u.ac.jp}
\address{Yejuan Peng\\ College of Mathematics and Information Sciences, \newline
\indent Henan Normal  University, 453007, Xinxiang, China, yejuan666@126.com }
\begin{document}
\maketitle

\begin{abstract}In this paper, we study $n$-dimensional complete
minimal hypersurfaces in a hyperbolic space $H^{n+1}(-1)$ of constant curvature $-1$.  We prove that a
$3$-dimensional complete  minimal hypersurface with constant scalar
curvature in $H^{4}(-1)$  satisfies $S\leq \frac{21}{29}$ by making use of the Generalized Maximum Principle,
where $S$ denotes the
squared norm of the second fundamental form of the hypersurface.
\end{abstract}

\section{Introduction}
Let $M^n$ be an $n$-dimensional minimal hypersurface in the hyperbolic space $H^{n+1}(-1)$ of
constant curvature $-1$.  It is a very important subject to study the rigidity of complete minimal hypersurfaces
in  the hyperbolic space $H^{n+1}(-1)$. It is well-known that there are many important results on the rigidity
of compact minimal hypersurfaces in the unit sphere $S^{n+1}(1)$. For examples, Simons \cite{s}, Chern-do Carmo-
Kobayashi \cite{cdk} and Lawson \cite{l} prove that an $n$-dimensional compact minimal hypersurface  in the unit sphere $S^{n+1}(1)$
is isometric to a totally geodesic sphere or a Clifford torus if  the squared norm $S$
of its  the second fundamental form satisfies $S\leq n$. In particular, for $n=3$, it is known that a $3$-dimensional compact
minimal hypersurface  in the unit sphere $S^{4}(1)$ with constant scalar curvature
is isometric to a totally geodesic sphere or a Clifford torus  or the Cartan minimal isoparametric hypersurface (cf. \cite{pt}, \cite{c}).
On the other  hand, Cheng and Wan \cite{cw} proved  complete minimal hypersufraces with constant scalar curvature in the Euclidean space $\mathbb R^4$
is isometric to the  hyperplane $\mathbb R^3$.  But for complete minimal hypersurfaces in the hyperbolic space $H^{n+1}(-1)$ , there are only a little results
on rigidity of complete minimal hypersurfaces. It is our main purpose to study the following conjecture:

\vskip2mm
\noindent
{\bf Conjecture.}   A complete minimal hypersurfaces with constant scalar curvature in  the hyperbolic space $H^{4}(-1)$
is  isometric to   the hyperbolic space $H^{3}(-1)$.

We will prove the following:
\begin{theorem}
A complete minimal hypersurfaces with constant scalar curvature in  the hyperbolic space $H^{4}(-1)$
satisfies $S\leq \dfrac{21}{29}$, where $S$ denotes the
squared norm of the second fundamental form of the hypersurface.
\end{theorem}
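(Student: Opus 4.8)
The plan is to use that, for a minimal hypersurface, constant scalar curvature is the same as constant $S$, and then to combine the Simons identities with the Generalized Maximum Principle applied to the cubic invariant of the second fundamental form. First I would extract the algebraic consequences of the Gauss equation. Writing $A=(h_{ij})$ for the second fundamental form, minimality gives $\operatorname{tr}A=0$, and the Gauss equation yields scalar curvature $R=n(n-1)c-S=-6-S$; hence ``constant scalar curvature'' is equivalent to ``$S$ constant''. In a frame diagonalizing $A$ one finds $\operatorname{Ric}(e_i)=(n-1)c-\lambda_i^{2}=-2-\lambda_i^{2}\ge -2-S$, so the Ricci curvature is bounded from below and the Generalized Maximum Principle is available on the complete manifold $M^{3}$. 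For a minimal hypersurface in a space form of curvature $c$ the Simons identity collapses (the cubic terms cancel) to $\Delta h_{ij}=(nc-S)h_{ij}$, so $\tfrac12\Delta S=|\nabla A|^{2}+(nc-S)S$. Because $S$ is constant this becomes the exact pointwise identity $|\nabla A|^{2}=(S+3)S$ (here $nc=-3$). In particular $\nabla A\equiv0$ forces $S=0$, i.e. the conjectural $H^{3}(-1)$; the task is to bound $S$ in the non-parallel case, and note that then $|\nabla A|^{2}$ is itself a positive constant.

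Next I would study the cubic invariant $f_{3}=\operatorname{tr}(A^{3})=\sum h_{ij}h_{jk}h_{ki}$, which for a traceless $3\times 3$ matrix equals $3\lambda_1\lambda_2\lambda_3$ and obeys $|f_{3}|\le \tfrac1{\sqrt6}S^{3/2}$, hence is bounded. Differentiating twice and using $\Delta h_{ij}=(nc-S)h_{ij}$ gives $\Delta f_{3}=3(nc-S)f_{3}+6T$, where $T=\sum h_{ij}h_{ikl}h_{jkl}=\sum_i\lambda_i a_i$ in the diagonalizing frame, with $a_i=\sum_{k,l}h_{ikl}^{2}\ge0$ and $\sum_i a_i=|\nabla A|^{2}=(S+3)S$. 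Applying the Generalized Maximum Principle to $f_{3}$ and to $-f_{3}$ produces sequences along which $f_3$ tends to its supremum (resp. infimum), $\nabla f_{3}\to0$ and $\limsup\Delta f_{3}\le0$ (resp. $\liminf\Delta f_3\ge0$). Along such a sequence the relations $\nabla(\operatorname{tr}A)=0$, $\nabla S=0$ and $\nabla f_{3}\to0$ read $\sum_i h_{iik}=0$, $\sum_i\lambda_i h_{iik}=0$, $\sum_i\lambda_i^{2}h_{iik}\to0$ for each $k$, a Vandermonde-type system which, when the $\lambda_i$ are distinct, kills the ``diagonal'' third derivatives $h_{iik}$ and hence makes $T$ small; the degenerate configurations $\lambda=(-2t,t,t)$ — precisely the extremizers of $|f_3|\le\tfrac1{\sqrt6}S^{3/2}$ — leave genuine freedom in $\nabla A$ and must be handled directly.

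The quantitative bound is the delicate part, and the first-order use of the maximum principle above is by itself too weak. I would therefore also invoke the second-order Simons identity: since $|\nabla A|^{2}$ is likewise constant, $0=\tfrac12\Delta|\nabla A|^{2}=|\nabla^{2}A|^{2}+(nc-S)|\nabla A|^{2}+\mathcal Q$, where $\mathcal Q$ is a zero-order term quadratic in $\nabla A$ whose coefficients, through the Gauss equation, are polynomial in $S$, $f_3$ and the $\lambda_i$. Combining this identity, the nonnegativity of $|\nabla^{2}A|^{2}$, and the two maximum-principle inequalities for $f_3$, the problem reduces to a constrained optimization over the admissible principal curvatures ($\sum_i\lambda_i=0$, $\sum_i\lambda_i^{2}=S$) and third-order data $h_{ijk}$ subject to $|\nabla A|^{2}=(S+3)S$ and the linear relations above; the binding case is the degenerate one, where the Vandermonde system no longer controls $T$. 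Carrying out this optimization produces a polynomial inequality in $S$ whose admissible range is $S\le\tfrac{21}{29}$. I expect this optimization — quantifying precisely how the constancy of $S$ restricts the distribution of $\nabla A$ among the (possibly coincident) principal directions — to be the main obstacle, since it is exactly here that the interplay between the constancy of $S$ and the non-parallelism of $A$ is measured and the sharp constant $\tfrac{21}{29}$ is produced.
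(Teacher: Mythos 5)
Your outline reproduces, correctly, the first layer of the paper's argument: constant scalar curvature is equivalent to constant $S$; the Ricci curvature is bounded below, so the Omori--Yau principle applies to the bounded function $f_3$; constancy of $S$ gives $\sum_{i,j,k}h_{ijk}^2=S(S+3)$; the identity $\tfrac13\Delta f_3=-(S+3)f_3+2C$ with $C=\sum_{i,j,k}\lambda_i h_{ijk}^2$; the Vandermonde system $\sum_i h_{iik}=0$, $\sum_i\lambda_i h_{iik}=0$, $\sum_i\lambda_i^2 h_{iik}\to 0$, which kills the diagonal third derivatives when the $\lambda_i$ are distinct (the paper's Lemma 4.1); and the observation that the two-eigenvalue configuration $\lambda=(t,t,-2t)$ is where the freedom of $\nabla A$ concentrates. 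This is precisely the paper's framework (Section 2 and the Case 1/Case 2 split of Section 4).

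The genuine gap is that everything which actually produces $\tfrac{21}{29}$ is deferred to an unexecuted ``constrained optimization,'' and the constant cannot be extracted from the structure you set up; the same framework is consistent with many constants, so the assertion is unverifiable as stated. In the two-eigenvalue case the paper does not optimize over admissible data: after rotating so that $h_{123}=0$ and setting $a=h_{113}^2$, $b=h_{111}^2+h_{112}^2$ (so $6a+4b=S(S+3)$), it derives two exact fourth-order identities, namely $x+2y=\tfrac{26}{9}a^2+\tfrac{7}{18}ab-b^2+\tfrac54Sb$ (from evaluating $\sum h_{ijkl}^2$ in two ways) and $x+4a\lambda h_{1111}=-\tfrac{34}{9}a^2-\tfrac43ab+\tfrac43b^2+\lambda^2(72\lambda^2+18)a+\lambda^2(40\lambda^2+8)b$ (from the third derivative of the constraint $S=\mathrm{const}$ together with $\sum_{i,j,k}(h_{ijk}^2)_{33}=0$, a long Ricci-identity computation your sketch never touches), eliminates $x$, and closes with the elementary estimate $2\lambda^2(h_{1111}^2+h_{1112}^2)-2(a-b)\lambda h_{1111}\ge-\tfrac{(a-b)^2}{2}$; only this specific chain yields the coefficient $\tfrac{21}{36}-\tfrac{29}{36}S$ and hence $S\le\tfrac{21}{29}$. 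You also misallocate the difficulty: you treat the distinct-eigenvalue case as disposed of because the Vandermonde system makes $C$ vanish, but that only gives $f_3>0$ at the limit point; to exclude this case the paper needs the componentwise second-order information $\limsup\nabla_l\nabla_l f_3\le 0$ for each fixed $l$ (strictly stronger than $\limsup\Delta f_3\le 0$), the explicit one-parameter solution $h_{iikk}=-\tfrac13(S+3)\lambda_i+g_i\lambda_k+wg_ig_k$ of the rank-5 linear system (Lemma 4.5), the quadratic determining $y=\bigl(\tfrac{S^2}{6}-\tfrac{f_3^2}{S}\bigr)w$, and separate contradictions for $y\ge 0$ and $y<0$. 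Your single scalar identity $0=\tfrac12\Delta|\nabla A|^2$ corresponds to just one of these equations (the paper's $\sum h_{ijkl}^2=S(S+3)(S+9)+3(A-2B)$) and is far from sufficient. As written, your proposal is a faithful road map of the paper's strategy, but it asserts, rather than proves, the theorem's entire quantitative content.
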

\newpage
\section{Basic formulas}
Let $M^{n}$ be an n-dimensional hypersurface in an $(n+1)$-dimensional hyperbolic space $H^{n+1}(-1)$.
At each point $p$ in $H^{n+1}(-1)$, we choose a local orthonormal frame field
$\{e_{1},e_{2},\cdots.e_{n+1}\}$ and the dual
coframe
$\{\omega^{1},\omega^{2},\cdots,\omega^{n+1}\}$
such that, restricted to $M^n$,  $\{e_{1},e_{2},\cdots,e_{n}\}$ is tangent to $M^{n}$ .  Then the structure eqquations of $N^{n+1}$ are given by
\begin{equation}
\begin{split}
d\omega_{A}=-\sum_{B}\omega_{AB}\land\omega_{B},\ \ \omega_{AB}+\omega_{BA}=0,\\
d\omega_{AB}+\sum_{C}\omega_{AC}\land\omega_{CB}=\frac{1}{2}\sum_{C,D}K_{ABCD}\ \omega_{C}\land\omega_{D}.
\end{split}
\end{equation}
with
\begin{equation}
\begin{split}
&K_{ABCD}=-(\delta_{AC}\delta_{BD}-\delta_{AD}\delta_{BC}).
\end{split}
\end{equation}
If we restrict these forms to $M^{n}$, then $\omega^{n+1}$=0. We have
\begin{equation}
\omega_{n+1, i}=\sum_{j}h_{ij}\omega_{j},\ \ \ \ \  h_{ij}=h_{ji}.
\end{equation}
One  calls
\begin{equation}
H=\frac{1}{n}\sum_{i}h_{ii}, \  \
h=\sum_{i,j}h_{ij}\omega_{i}\otimes\omega_{j}
\end{equation}
are the mean curvature and the second fundamental form of $M^{n}$ respectively. If $H$ is identically zero,  $M^{n}$ is called  to be  minimal.
The structure equations of $M^n$ are given by
\begin{equation}
\begin{split}
d\omega_{i}=-\sum_{j}\omega_{ij}\land\omega_{j},\ \ \omega_{ij}+\omega_{ji}=0,\\
d\omega_{ij}+\sum_{k}\omega_{ik}\land\omega_{kj}=\frac{1}{2}\sum_{k,l}R_{ijkl}\ \omega_{k}\land\omega_{l},
\end{split}
\end{equation}
where
\begin{equation}
R_{ijkl}= -(\delta_{ik}\delta_{jl}-\delta_{il}\delta_{jk})+(h_{ik}h_{jl}-h_{il}h_{jk}).
\end{equation}
For minimal hypersurfaces in $H^{n+1}(-1)$, we obtain
\begin{align*}
  R=-n(n-1)-S,
\end{align*}
where $R$ and $S$ denote the scalar curvature and the squared norm
of the second fundamental form of $M^n$, respectively. From the structure equations of $M^n$,
Codazzi equations and  Ricci formulas are given by
\begin{align*}
h_{ijk}=h_{ikj},
\end{align*}
\begin{align*}
h_{ijkl}-h_{ijlk}=\sum_mh_{im}R_{mjkl}+\sum_mh_{mj}R_{mikl}.
\end{align*}
where  $h_{ijk}=\nabla_kh_{ij}$ and $h_{ijkl}=\nabla_l\nabla_kh_{ij} $,
respectively.

We define functions $f_3$ and $f_4$ by
\begin{align*}
f_3=\sum_{i,j,k=1}^nh_{ij}h_{jk}h_{ki} \ \ {\rm and} \ \
f_4=\sum_{i,j,k,l=1}^nh_{ij}h_{jk}h_{kl}h_{li}
\end{align*}
respectively.
 Then, we have, for minimal hypersurfaces,
  \begin{equation}
  \dfrac13\Delta f_3=-(n+S)f_3+2C,
 \end{equation}
\begin{equation}
  \dfrac14\Delta f_4= -(n+S)f_4+ (2A+B),
 \end{equation}
 where
 $$C=\sum_{i,j,k} \lambda_ih_{ijk}^2, \  \  A=\sum_{i,j,k}\lambda_i^2h_{ijk}^2, \ \
B=\sum_{i,j,k}\lambda_i\lambda_jh_{ijk}^2
$$
and $\lambda_i$'s are principal curvatures of $M^n$.
\begin{align*}
&\sum_ih_{ii}=\sum_i\lambda_i=0, \quad
S=\sum_{i,j}h_{ij}^2=\sum_i\lambda_i^2, \\
&h_{ijij}-h_{jiji}=(\lambda_i-\lambda_j)(-1+\lambda_i\lambda_j).
\end{align*}
By a direct computation, we have
\begin{equation*}
\begin{split}
&S=n(1-n)-R, \\
&\Delta h_{ij}=-(S+n)h_{ij}.\\
&\frac{1}{2}\Delta S=-S(S+n)+\sum_{i,j,k}h_{ijk}^{2}.\\
\end{split}
\end{equation*}

If the squared norm $S$ of the second fundamental
form is constant, we have
\begin{equation*}
\begin{split}
&\sum_{i,j,k}h_{ijk}^{2}=S(S+n).\\
&\sum_{i,j,k,l}h_{ijkl}^{2}=S(S+n)(2n+3+S)+3(A-2B).\\
\end{split}
\end{equation*}

The following Generalized Maximum Principle due to Omori \cite{o} (cf. Yau \cite{y}) will play an important role in this paper.
\begin{theorem}
Let $M^{n}$ be a complete Riemannian manifold with sectional  curvature bounded from below.
If a $\mathcal C^2$-function $f$ is bounded from above in $M^{n}$, then there exists a sequence of $\{p_{k}\}_{k=1}^{\infty} \subset{M^{n}}$ such that
\begin{enumerate}
\item $\lim_{k \to \infty} f(p_{k})=\underset{M^{n}}{\sup}\ f$
\item $\lim_{k \to \infty}|\nabla f(p_{k})|=0$
\item $\lim_{k \to \infty}\sup\ \nabla_l\nabla_lf(p_{k})\leq 0$, for  $l=1,  2,  \cdots, n$.
\end{enumerate}
\end{theorem}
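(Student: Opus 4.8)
The plan is to prove this statement, which is the Omori--Yau generalized maximum principle, by the classical auxiliary-function argument from comparison geometry. Fix a base point $o\in M^n$ and let $r(x)=\mathrm{dist}(o,x)$ denote the Riemannian distance from $o$, and write $f^{*}=\sup_{M^n}f$, which is finite by hypothesis. The guiding idea is that the lower bound on the sectional curvature, say $\mathrm{Sec}\geq -K^2$ for some constant $K\geq 0$, controls the distance function from above at second order, so that subtracting a small multiple of a function of $r$ from $f$ produces an auxiliary function which genuinely attains an interior maximum while perturbing the gradient and Hessian of $f$ by a controllably small amount.

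The key analytic input is the Hessian comparison theorem: wherever $r$ is smooth one has
\[
\nabla^2 r \leq K\coth(Kr)\,\bigl(g-dr\otimes dr\bigr),
\]
together with $|\nabla r|=1$. Consequently, for any $\phi\in\mathcal C^2[0,\infty)$ with $\phi'\geq 0$ and with $\phi',\phi''$ bounded, the function $\phi(r)$ has gradient and Hessian bounded by a constant depending only on $K$, uniformly over the manifold. I would then, for each $\varepsilon>0$, set $\phi(r)=\sqrt{1+r^2}$ and consider
\[
f_\varepsilon = f - \varepsilon\,\sqrt{1+r^2}.
\]
Since $f\leq f^{*}$ while $\sqrt{1+r^2}\to\infty$ as $r\to\infty$, the function $f_\varepsilon$ tends to $-\infty$ at infinity and hence attains its supremum at some point $p_\varepsilon\in M^n$. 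At an interior maximum of a $\mathcal C^2$ function the first and second order conditions give $\nabla f_\varepsilon(p_\varepsilon)=0$ and $\nabla^2 f_\varepsilon(p_\varepsilon)\leq 0$, that is
\[
\nabla f(p_\varepsilon)=\varepsilon\,\nabla\phi(r),\qquad \nabla^2 f(p_\varepsilon)\leq \varepsilon\,\nabla^2\phi(r).
\]

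To conclude, note that $\phi'=r/\sqrt{1+r^2}\in[0,1)$ and $\phi''=(1+r^2)^{-3/2}$ are bounded, so by the comparison estimate there is a constant $C=C(K)$, independent of $\varepsilon$ and of $p_\varepsilon$, with $|\nabla\phi(r)|\leq 1$ and $\nabla^2\phi(r)\leq C\,g$; hence at $p_\varepsilon$ one gets $|\nabla f|\leq \varepsilon$ and $\nabla_l\nabla_l f\leq \varepsilon C$ for each $l$. Taking $\varepsilon=\varepsilon_k\to 0$ and $p_k=p_{\varepsilon_k}$ then yields conclusions (2) and (3). For (1), given $\delta>0$ choose $q_\delta$ with $f(q_\delta)>f^{*}-\delta$; the maximality $f_\varepsilon(p_\varepsilon)\geq f_\varepsilon(q_\delta)$ gives $f(p_\varepsilon)\geq f^{*}-\delta-\varepsilon\,\phi(r(q_\delta))$, and letting $\varepsilon\to 0$ and then $\delta\to 0$ forces $f(p_k)\to f^{*}$, since trivially $f(p_k)\leq f^{*}$.

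The main obstacle is that $r$ is only Lipschitz: it fails to be smooth at $o$ and on the cut locus of $o$, precisely where the maximum point $p_\varepsilon$ may lie, so the second-order computation above cannot be applied verbatim. I would resolve this by Calabi's trick. If $p_\varepsilon$ belongs to the cut locus, replace $o$ by a point $o'$ slightly interior along a minimizing geodesic from $o$ to $p_\varepsilon$; then $\tilde r(x)=\mathrm{dist}(o,o')+\mathrm{dist}(o',x)$ is smooth near $p_\varepsilon$, satisfies $\tilde r\geq r$ with equality at $p_\varepsilon$, and still obeys the Hessian comparison bound. Applying the argument to $f-\varepsilon\,\phi(\tilde r)$, which touches $f_\varepsilon$ from below at $p_\varepsilon$, recovers the same two inequalities in the barrier sense, which is all that is needed. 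This reduction to a smooth upper barrier, together with the translation of the sectional-curvature lower bound into the Hessian comparison inequality, is the technical heart of the proof.
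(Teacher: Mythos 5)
Your proof is correct, but there is nothing in the paper to compare it against: the paper does not prove this theorem at all. It is quoted verbatim as the Generalized Maximum Principle ``due to Omori \cite{o} (cf.\ Yau \cite{y})'' and used as a black box in Section 4. What you have written is the standard modern proof found in the cited literature: perturb $f$ by $\varepsilon\sqrt{1+r^2}$, force an honest interior maximum via completeness (Hopf--Rinow) and the decay at infinity, read off the first- and second-order conditions there, control the perturbation uniformly through the Hessian comparison theorem $\nabla^2 r \leq K\coth(Kr)\,(g - dr\otimes dr)$ valid under $\mathrm{Sec}\geq -K^2$, and handle the cut locus by Calabi's upper-barrier trick. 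Your route is also the right one for this particular statement: conclusion (3) bounds the individual diagonal entries $\nabla_l\nabla_l f$, not merely $\Delta f$, so one genuinely needs the full Hessian comparison (hence the sectional, not just Ricci, curvature hypothesis), and your argument delivers exactly the stronger estimate $\nabla^2 f(p_\varepsilon)\leq \varepsilon C\,g$.

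Two minor presentational points, neither a gap. First, your choice $\phi(r)=\sqrt{1+r^2}$ is a smooth function of $r^2$, and $r^2$ is smooth near the base point $o$, so the non-smoothness of $r$ at $o$ is already resolved by the choice of $\phi$; Calabi's trick is needed only on the cut locus, as you in effect use it. You should also observe that $\phi'(r)K\coth(Kr)$ stays bounded as $r\to 0$ (since $\phi'(r)\sim r$ cancels the $1/r$ singularity of $\coth$), which is what makes the constant $C(K)$ uniform. Second, in the proof of conclusion (1) the limits in $\varepsilon$ and $\delta$ must be taken jointly: for each $k$ pick $\delta_k=1/k$, a point $q_k$ with $f(q_k)>f^{*}-1/k$, and then $\varepsilon_k$ small enough that $\varepsilon_k\,\phi(r(q_k))<1/k$, giving $f(p_k)>f^{*}-2/k$. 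As written (``letting $\varepsilon\to 0$ and then $\delta\to 0$'') the dependence of the admissible $\varepsilon$ on $q_\delta$ is left implicit; the diagonal selection makes it airtight.
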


\section{Minimal hypersurfaces with two distinct principlal curvatures}

\begin{theorem}
Let $M^{3}$ be a minimal  hypersurface  in $H^{4}(-1)$ with constant scalar curvature. If $M^{3}$ has two principal curvatures somewhere,
 we have $S\leq \frac{21}{29}$.
\end{theorem}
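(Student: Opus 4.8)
The plan is to work at a point $p_{0}$ where exactly two principal curvatures occur and to exploit that there the cubic invariant $f_{3}$ is extremal. First I would record the pointwise algebra. Since $M^{3}$ is minimal, $\lambda_{1}+\lambda_{2}+\lambda_{3}=0$, so having only two distinct principal curvatures forces, up to ordering, $\lambda_{1}=\lambda_{2}=a$ and $\lambda_{3}=-2a$; hence $S=6a^{2}$, $f_{3}=-6a^{3}$, and (from $\sum_{i}\lambda_{i}=0$ via Newton's identities) $f_{4}=\tfrac{1}{2}S^{2}$. More importantly, every minimal $M^{3}$ satisfies the algebraic inequality $6f_{3}^{2}\le S^{3}$, with equality exactly when two principal curvatures coincide (equivalently, the cubic $t^{3}-\tfrac{S}{2}t-\tfrac{f_{3}}{3}$ has a repeated root). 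Thus $f_{3}^{2}\le\tfrac{1}{6}S^{3}$ holds on all of $M^{3}$, and the hypothesis says this bound is attained at $p_{0}$.

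Next I would apply the Generalized Maximum Principle (Theorem 2.2) to $f_{3}^{2}$, which is bounded above by $\tfrac{1}{6}S^{3}$ and attains this value at $p_{0}$; since the supremum is attained, the ordinary second-derivative test gives $\nabla f_{3}(p_{0})=0$ (as $f_{3}(p_{0})\neq0$) and $f_{3}(p_{0})\,\nabla_{k}\nabla_{l}f_{3}(p_{0})\preceq0$. Orienting the normal so that $f_{3}(p_{0})<0$, the Hessian $(\nabla_{k}\nabla_{l}f_{3})(p_{0})$ is positive semidefinite. The vanishing gradient drives the first-order reduction: from $\nabla_{l}f_{3}=3\sum_{i}\lambda_{i}^{2}h_{iil}$ together with $\sum_{i}h_{iil}=0$ I would deduce $h_{33l}=0$ and $h_{11l}=-h_{22l}$ for all $l$ at $p_{0}$, so that the only independent components of $\nabla h$ are $h_{111},h_{112},h_{113},h_{123}$, and $\sum_{i,j,k}h_{ijk}^{2}=S(S+3)$ becomes $4h_{111}^{2}+4h_{112}^{2}+6h_{113}^{2}+6h_{123}^{2}=S(S+3)$. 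With these, $C=4a(h_{111}^{2}+h_{112}^{2})$ and $A,B$ are likewise explicit, and one checks $2A+B=\tfrac{1}{2}(S+3)S^{2}$, consistent with $\Delta f_{4}=0$.

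The decisive input is the second-order condition. The plan is to compute the full Hessian of $f_{3}$ at $p_{0}$ through $\nabla_{k}\nabla_{l}f_{3}=3\sum_{i,j}(\lambda_{i}+\lambda_{j})h_{ijk}h_{ijl}+3\sum_{i}\lambda_{i}^{2}h_{iikl}$, which forces me to evaluate the fourth covariant derivatives $h_{iikl}$. These I would pin down using the Ricci identity together with $h_{ijij}-h_{jiji}=(\lambda_{i}-\lambda_{j})(-1+\lambda_{i}\lambda_{j})$, the constancy of $S$ in the form $\nabla_{k}\nabla_{l}S=0$, and $\Delta f_{4}=0$; these already yield, for example, $h_{33ll}=\tfrac{1}{3a}\sum_{i,j}h_{ijl}^{2}$. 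Imposing positive semidefiniteness of $(\nabla_{k}\nabla_{l}f_{3})(p_{0})$ and feeding in the constraint $4h_{111}^{2}+4h_{112}^{2}+6h_{113}^{2}+6h_{123}^{2}=S(S+3)$, I expect a polynomial inequality in $S$ whose solution is $S\le\tfrac{21}{29}$.

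The main obstacle is precisely this fourth-order step. The diagonal entries turn out to be automatically nonnegative, namely $\nabla_{1}\nabla_{1}f_{3}=\nabla_{2}\nabla_{2}f_{3}=18a(h_{111}^{2}+h_{112}^{2})$ and $\nabla_{3}\nabla_{3}f_{3}=18a(h_{113}^{2}+h_{123}^{2})$, so they carry no information; the content of positive semidefiniteness must come from the off-diagonal entries such as $\nabla_{1}\nabla_{2}f_{3}=18a^{2}h_{3312}$, whose evaluation requires the mixed fourth derivatives $h_{iikl}$ with $k\neq l$. Controlling these mixed terms and organizing the resulting $2\times2$ and $3\times3$ minor inequalities into the single sharp bound $\tfrac{21}{29}$ is the technical heart of the argument.
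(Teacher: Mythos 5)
Your setup through first order is sound: at a point where $\lambda_{1}=\lambda_{2}=\lambda\neq 0$, constancy of the trace and of $S$ force $h_{33k}=0$ and $h_{11k}=-h_{22k}$, hence $\nabla f_{3}=0$ there, and your constraint $4h_{111}^{2}+4h_{112}^{2}+6h_{113}^{2}+6h_{123}^{2}=S(S+3)$ is the paper's $6a+4b=S(S+3)$; the observation that $6f_{3}^{2}\le S^{3}$ with equality exactly at such points is also correct, so the supremum of $f_{3}^{2}$ is genuinely attained at $p_{0}$ and no Omori--Yau is even needed. But the step you call decisive is vacuous: the semidefiniteness of $\nabla^{2}f_{3}(p_{0})$ holds identically and carries no information about $S$. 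The reason is that \emph{all} fourth-order quantities cancel out of the Hessian: $\sum_{i}h_{iikl}=0$ and $\lambda_{1}=\lambda_{2}$ give $\sum_{i}\lambda_{i}^{2}h_{iikl}=3\lambda^{2}h_{33kl}$, while $\nabla_{k}\nabla_{l}S=0$ gives $h_{33kl}=\frac{1}{3\lambda}\sum_{i,j}h_{ijk}h_{ijl}$ (the paper's Lemma 3.1), so that
\begin{equation*}
\nabla_{k}\nabla_{l}f_{3}(p_{0})=3\sum_{i,j}\bigl(\lambda_{i}+\lambda_{j}+\lambda\bigr)h_{ijk}h_{ijl}
=9\lambda\sum_{i,j=1}^{2}h_{ijk}h_{ijl},
\end{equation*}
since $\lambda_{i}+\lambda_{j}+\lambda=0$ whenever exactly one index is $3$ and $h_{33k}=0$ kills the $(3,3)$ term. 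The right-hand side is $9\lambda$ times a Gram matrix, hence automatically positive semidefinite when $f_{3}(p_{0})<0$ (i.e.\ $\lambda>0$) and negative semidefinite when $f_{3}(p_{0})>0$. Concretely, with your frame ($h_{123}=0$) the off-diagonal entries are $\nabla_{1}\nabla_{2}f_{3}=0$ (note $h_{3312}=0$, and the coefficient is $9\lambda^{2}$, not $18\lambda^{2}$), $\nabla_{1}\nabla_{3}f_{3}=18\lambda h_{111}h_{113}$, $\nabla_{2}\nabla_{3}f_{3}=18\lambda h_{112}h_{113}$; the $2\times 2$ minors reduce to $h_{112}^{2}h_{113}^{2}\ge 0$ and $h_{111}^{2}h_{113}^{2}\ge 0$, and the $3\times3$ determinant is identically zero. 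So the ``minor inequalities'' you hoped to organize into $S\le\frac{21}{29}$ are all trivially satisfied, for every value of $S$.

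This is why the paper does not use a second-derivative test at all: the free fourth-order data $h_{1111},h_{1112},h_{1113},\dots$ are invisible to $\nabla^{2}f_{3}(p_{0})$ and must be constrained by genuinely higher-order identities. The paper's route is: (i) the norm identity $\sum_{i,j,k,l}h_{ijkl}^{2}=S(S+3)(S+9)+3(A-2B)$, coming from $\Delta\bigl(\sum_{i,j,k}h_{ijk}^{2}\bigr)=0$, expanded componentwise at $p$ in terms of $a=h_{113}^{2}$, $b=h_{111}^{2}+h_{112}^{2}$ (Lemma 3.2, an identity for $x+2y$); (ii) the third-order identities $\nabla_{m}\nabla_{l}\nabla_{k}S=0$ and $\bigl(\sum_{i,j,k}h_{ijk}^{2}\bigr)_{33}=0$, which after the long curvature computation give a second identity for $x+4a\lambda h_{1111}$ (Lemma 3.3). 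Eliminating $x$ leaves $2\lambda^{2}(h_{1111}^{2}+h_{1112}^{2})-2(a-b)\lambda h_{1111}$ equal to an explicit quadratic in $a,b,S$, and the completed square $2\lambda^{2}t^{2}-2(a-b)\lambda t\ge-\frac{1}{2}(a-b)^{2}$ together with $6a+4b=S(S+3)$ yields $\bigl(\frac{21}{36}-\frac{29}{36}S\bigr)Sa\ge \frac{76}{18}ab+\frac{35}{18}b^{2}+\frac{13}{12}S^{2}b\ge 0$, whence $S\le\frac{21}{29}$. To salvage your outline you would need third-order extremality conditions at the degenerate maximum or the global identities above; the Hessian of $f_{3}$ (equivalently of $f_{3}^{2}$) at $p_{0}$ cannot produce the bound.
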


\vskip2mm
\noindent
{\it Proof.}
We assume,  at $p\in M^{3}$,  that $M^{3}$ has  two distinct principal curvatures.
At $p$,  we may choose  an orthonormal frame   $e_1, e_2, e_3$ such that $h_{ij}=\lambda_{i}\delta_{ij}$.
We can let
\begin{equation*}
\lambda_{1}=\lambda_{2}=\lambda.
\end{equation*}
Since $M^{3}$  is minimal, we have
\begin{equation*}
\lambda_{3}=-2\lambda,\  \  \lambda^{2}=\frac{S}{6}.
\end{equation*}
Since $\sum_{i}h_{ii}=0$ and $S$ is constant, we have
\begin{equation*}
h_{11k}+h_{22k}+h_{33k}=0,\
h_{11k}+h_{22k}-2h_{33k}=0.
\end{equation*}
We obtain
\begin{equation*}
h_{11k}+h_{22k}=0, \ h_{33k}=0,\ k=1,2,3.
\end{equation*}
We can choose $e_{1},e_{2}$ such that ${h}_{123}(p)=0$ at $p$. In fact, if necessary, we make a rotation of
$e_{1},e_{2}$  with angle $\theta$, which satisfies
\begin{equation*}
\cos(-2\theta)=\frac{h_{223}(p)}{\sqrt{h_{223}^{2}(p)+h_{123}^{2}(p)}}, \
\sin(-2\theta)=\frac{h_{123}(p)}{\sqrt{h_{223}^{2}(p)+h_{123}^{2}(p)}}.
\end{equation*}
Letting
\begin{equation*}
a=h_{113}^{2}, \ \ b=h_{111}^{2}+h_{112}^{2},
\end{equation*}
since
\begin{equation*}
 \begin{split}
&S(S+3)=
\sum_{i,j,k}h_{ijk}^{2}\\
&=3(h_{112}^{2}+h_{113}^{2}+h_{221}^{2}+h_{223}^{2})+(h_{111}^{2}+h_{222}^{2})\\
&=6h_{113}^{2}+4(h_{111}^{2}+h_{112}^{2}),
\end{split}
\end{equation*}
we have
\begin{equation*}
6a+4b=S(S+3),
\end{equation*}
Since $n=3$, we have
$$
f_{4}=\frac{1}{2}S^{2},  \  \   2A+B=\frac{1}{2}S^{2}(S+3).\
$$
\begin{lemma}
$h_{ijkl}$ are symmetric in $i, j, k, l$ if $i, j, k, l$ are  not $\{1,1,3,3\}, \{2,2,3,3\}$  and
\begin{equation*}
\begin{split}
&h_{3311}=h_{3322}=\frac{2}{3\lambda}{(a+b)}, \
h_{3333}=\frac{2a}{3\lambda}, \
h_{3312}=0, \
h_{3313}=\frac{2}{3\lambda}h_{111}h_{113}, \\
 &h_{3323}=\frac{2}{3\lambda}h_{112}h_{113},\
h_{1111}=h_{2222}, \
h_{1133}=h_{2233}=-\frac{a}{3\lambda}.
\end{split}
\end{equation*}
\end{lemma}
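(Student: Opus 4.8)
\vskip2mm
\noindent
\textit{Proof proposal.}
The two structural inputs are the Codazzi equations and the Ricci identity recorded above. Since $h_{ijk}=h_{ikj}$, the third-order tensor $h_{ijk}$ is totally symmetric, so $h_{ijkl}=\nabla_l h_{ijk}$ is automatically symmetric in its first three indices $i,j,k$. Because $S_4$ is generated by the permutations of these three indices together with the transposition of the last two, total symmetry of $h_{ijkl}$ reduces to deciding when $h_{ijkl}=h_{ijlk}$. For this I evaluate the Ricci identity at $p$, where $h_{ij}=\lambda_i\delta_{ij}$ and hence $R_{ijkl}=(\lambda_i\lambda_j-1)(\delta_{ik}\delta_{jl}-\delta_{il}\delta_{jk})$; a short computation collapses it to
\begin{equation*}
h_{ijkl}-h_{ijlk}=(\lambda_i\lambda_j-1)(\lambda_i-\lambda_j)(\delta_{ik}\delta_{jl}-\delta_{il}\delta_{jk}).
\end{equation*}
Since $\lambda_1=\lambda_2=\lambda$ and $\lambda_3=-2\lambda$, the factor $\lambda_i-\lambda_j$ forces the right-hand side to vanish unless $\{i,j\}=\{k,l\}$ equals $\{1,3\}$ or $\{2,3\}$, i.e. unless the index multiset is $\{1,1,3,3\}$ or $\{2,2,3,3\}$. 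This is exactly the asserted symmetry, and it also records the precise ``defect'' in the two exceptional cases.

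Next I would compute the components carrying two $3$'s by exploiting the constancy of $S$. First list the nonzero $h_{ijk}$ at $p$: writing $h_{111},h_{112},h_{113}$ for the free data, the constraints $\sum_i h_{iik}=0$, $\sum_i\lambda_i h_{iik}=0$ and $h_{123}(p)=0$ give $h_{122}=-h_{111}$, $h_{222}=-h_{112}$, $h_{223}=-h_{113}$, while every remaining component containing an index $3$ vanishes. Differentiating $S=\mathrm{tr}(h^2)$ twice yields, as a frame-independent identity evaluated at $p$, $\sum_{i,j}h_{ijk}h_{ijl}+\sum_i\lambda_i h_{iikl}=0$; combining this with the trace relation $h_{11kl}+h_{22kl}+h_{33kl}=0$ gives the clean formula
\begin{equation*}
h_{33kl}=\frac{1}{3\lambda}\sum_{i,j}h_{ijk}h_{ijl}.
\end{equation*}
Evaluating the six quadratic sums for $(k,l)\in\{(1,1),(2,2),(3,3),(1,2),(1,3),(2,3)\}$ from the list above reproduces $h_{3311}=h_{3322}=\frac{2}{3\lambda}(a+b)$, $h_{3333}=\frac{2a}{3\lambda}$, $h_{3312}=0$, $h_{3313}=\frac{2}{3\lambda}h_{111}h_{113}$ and $h_{3323}=\frac{2}{3\lambda}h_{112}h_{113}$, with $a=h_{113}^2$, $b=h_{111}^2+h_{112}^2$.

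The two remaining identities then follow. For $h_{1111}=h_{2222}$, note that the multiset $\{1,1,2,2\}$ is not exceptional, so $h_{1122}=h_{2211}$; subtracting the two trace relations $h_{1111}+h_{2211}+h_{3311}=0$ and $h_{1122}+h_{2222}+h_{3322}=0$ and using $h_{3311}=h_{3322}$ gives $h_{1111}=h_{2222}$. For $h_{1133}$, I would use the exceptional defect: reordering the first three indices gives $h_{3311}=h_{3131}$ and $h_{1133}=h_{3113}$, and the defect formula applied to $h_{3131}-h_{3113}$ yields $h_{3311}-h_{1133}=\lambda(S+3)$. Substituting $\lambda^2=S/6$ together with $3a+2b=\tfrac12 S(S+3)$ (half of the relation $6a+4b=S(S+3)$) turns $h_{3311}=\frac{2}{3\lambda}(a+b)$ into $h_{1133}=-\frac{a}{3\lambda}$; the same argument with the index $2$ gives $h_{2233}=-\frac{a}{3\lambda}$.

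The conceptual content here is modest; the real labor, and the place where errors creep in, is the bookkeeping. One must correctly enumerate the nonzero $h_{ijk}$, evaluate the six quadratic contractions without sign slips, and keep straight which orderings of a given multiset are linked by a zero-defect swap and which are not. The one genuine subtlety I would flag is that the identities being differentiated — trace-freeness and $\nabla^2 S=0$ — are tensorial and so may be evaluated at $p$ in the diagonalizing frame, whereas $h_{ij}=\lambda_i\delta_{ij}$ holds only at the single point $p$; accordingly every product must be formed and then evaluated pointwise at $p$, and one must resist differentiating the diagonal form itself.
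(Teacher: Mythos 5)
Your proposal is correct and follows essentially the same route as the paper: the Ricci identity gives the commutation defect $(\lambda_i-\lambda_j)(-1+\lambda_i\lambda_j)(\delta_{ik}\delta_{jl}-\delta_{il}\delta_{jk})$, which vanishes except on the multisets $\{1,1,3,3\}$ and $\{2,2,3,3\}$, and twice differentiating the constant $S$ together with the trace relation yields $h_{33kl}=\frac{1}{3\lambda}\sum_{i,j}h_{ijk}h_{ijl}$, exactly the two identities in the paper's proof. You merely carry out explicitly the bookkeeping the paper compresses into ``Thus, we get our conclusion'' (the quadratic contractions, the trace argument for $h_{1111}=h_{2222}$, and the defect computation giving $h_{1133}=h_{2233}=-\frac{a}{3\lambda}$), and all of these check out.
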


\begin{proof} According to the Ricci formula,
\begin{equation*}
\begin{split}
h_{ijkl}-h_{ijlk}
&=\sum_{m}h_{mj}R_{mikl}+\sum_{m}h_{im}R_{mjkl}\\
&=(\lambda_{i}-\lambda_{j})R_{ijkl}\\
&=(\lambda_{i}-\lambda_{j}(-1+\lambda_{i}\lambda_{j})(\delta_{ik}\delta_{jl}-\delta_{il}\delta_{jk}).
\end{split}
\end{equation*}
and  $S=\sum_{i,j}h_{ij}^{2}$ is constant, we have
\begin{equation*}
\begin{split}
0&=\sum_{i,j}(h_{ij}^{2})_{kl}\\
&=2(\sum_{i,j}h_{ijk}h_{ijl}+\sum_{i,j}h_{ij}h_{ijkl})\\
&=2(\sum_{i,j}h_{ijk}h_{ijl}-3\lambda h_{33kl}).
\end{split}
\end{equation*}
Thus, we get our conclusion.

\end{proof}
\begin{lemma}
\label{3.5.}
\begin{equation}
\label{3.1}
x+2y=\frac{26}{9}a^{2}+\frac{7}{18}ab-b^{2}+\frac{5}{4}Sb,
\end{equation}
where
\begin{equation*}
\begin{split}
x=\lambda^{2}[3(h_{1123}^{2}+h_{2213}^{2})+h_{1113}^{2}+h_{2223}^{2}],\\
y=\lambda^{2}(h_{1111}^{2}+h_{1112}^{2})+(a+b)\lambda h_{1111}.
\end{split}
\end{equation*}
\end{lemma}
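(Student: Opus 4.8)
The plan is to extract the identity from the constancy of $2A+B$. Because $n=3$ and $S$ is constant, the excerpt gives $f_{4}=\tfrac12 S^{2}$ and $2A+B=\tfrac12 S^{2}(S+3)$, so $2A+B$ is a constant function on $M^{3}$ and therefore $\Delta(2A+B)=0$. I would write $A$ and $B$ in the invariant forms $A=\sum_{m,n,j,k}(h^{2})_{mn}h_{mjk}h_{njk}$ and $B=\sum_{i,j,k,m,n}h_{im}h_{jn}h_{ijk}h_{mnk}$, where $(h^{2})_{mn}=\sum_{i}h_{mi}h_{in}$, and then compute $\tfrac12\Delta(2A+B)$ directly by the Leibniz rule, evaluating the outcome at the point $p$ where $\lambda_{1}=\lambda_{2}=\lambda$, $\lambda_{3}=-2\lambda$ and $\lambda^{2}=S/6$. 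A dimension count supports this route: $2A+B$ has weight $3$ in $S$, so $\Delta(2A+B)$ has weight $4$, matching $x+2y$ and the right-hand side of \eqref{3.1}, whereas $\sum_{i,j,k,l}h_{ijkl}^{2}$ is only weight $3$ and cannot produce it directly.

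The top order part of $\tfrac12\Delta(2A+B)$ is the fourth-derivative quadratic form $2\sum_{i,j,k,l}\lambda_{i}^{2}h_{ijkl}^{2}+\sum_{i,j,k,l}\lambda_{i}\lambda_{j}h_{ijkl}^{2}$ at $p$, coming from the double-gradient terms $(h^{2})_{mn}h_{mjkl}h_{njkl}$ and $h_{im}h_{jn}h_{ijkl}h_{mnkl}$. I would reduce this sum using three inputs: the previous lemma, which fixes the mixed components $h_{1133},h_{3311},h_{3333},h_{3313},h_{3323},h_{3312}$ and the relation $h_{1111}=h_{2222}$; the trace identities $\sum_{i}h_{iikl}=0$ from minimality, which express $h_{1122}$ and its relatives through $h_{1111}$ and the lemma's data; and the Ricci formula $h_{ijkl}-h_{ijlk}=(\lambda_{i}-\lambda_{j})(-1+\lambda_{i}\lambda_{j})(\delta_{ik}\delta_{jl}-\delta_{il}\delta_{jk})$, which relates non-commuting orderings such as $h_{1133}$ and $h_{1313}$. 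After this reduction the genuinely free components are exactly $h_{1111},h_{1112},h_{1113},h_{2223},h_{1123},h_{2213}$, and they assemble into $x+2y$; in particular the linear summand $(a+b)\lambda h_{1111}$ in $y$ is produced when the determined combination for $h_{1122}$ (involving $h_{3322}=\tfrac{2(a+b)}{3\lambda}$) is squared inside the quadratic form. The lower order part of $\tfrac12\Delta(2A+B)$ comes from the single-gradient cross terms $\nabla_{l}(h^{2})_{mn}\,\nabla_{l}(h_{mjk}h_{njk})$ together with $\Delta(h^{2})_{mn}=-2(S+n)(h^{2})_{mn}+2\sum_{i,l}h_{mil}h_{inl}$ and $\Delta h_{ijk}=-(S+n)h_{ijk}+(\text{curvature terms})$; these contribute the quartic third-derivative part $\tfrac{26}{9}a^{2}+\tfrac{7}{18}ab-b^{2}$ and the curvature part $\tfrac54 Sb$. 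Setting the whole expression equal to zero and simplifying with $\lambda^{2}=S/6$, $6a+4b=S(S+3)$, and the intermediate values $A=\lambda^{2}(12a+4b)$, $B=\lambda^{2}(4b-6a)$ yields \eqref{3.1}.

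The main obstacle is this middle reduction. Separating the prescribed fourth derivatives of the previous lemma from the free symmetric ones inside $2\sum\lambda_{i}^{2}h_{ijkl}^{2}+\sum\lambda_{i}\lambda_{j}h_{ijkl}^{2}$ is delicate, because several patterns (for instance $h_{1133}$ versus $h_{3311}$, and $h_{1313}$ versus $h_{1133}$) agree as multisets of indices but differ precisely by the curvature terms of the Ricci formula; it is exactly these non-commuting pairs, together with the squared trace combination for $h_{1122}$, that generate the linear term $(a+b)\lambda h_{1111}$ and hence the $\tfrac54 Sb$ summand. Keeping every permutation multiplicity correct and tracking the signs arising from $-1+\lambda_{i}\lambda_{j}$ is where the computation is most error-prone, and the fractional coefficients $\tfrac{26}{9}$, $\tfrac{7}{18}$ and the sign of $-b^{2}$ are sensitive to this accounting; I would therefore cross-check the final form by evaluating $2\sum\lambda_{i}^{2}h_{ijkl}^{2}+\sum\lambda_{i}\lambda_{j}h_{ijkl}^{2}$ independently from the previous lemma before reporting it.
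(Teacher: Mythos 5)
Your plan does not prove the lemma as stated, and the reason you give for preferring it rests on a miscount. The paper's proof is exactly the route you dismiss: identity $(\ref{3.1})$, multiplied by $4/\lambda^{2}$, is precisely the statement that the pointwise expansion of $\sum_{i,j,k,l}h_{ijkl}^{2}$ --- with the determined components $h_{3311}, h_{3322}, h_{1133}, h_{2233}, h_{3313}, h_{3323}, h_{3333}$ substituted from the preceding lemma and $h_{2211}, h_{2222}$, etc.\ eliminated by the trace relations --- equals $3S(S+3)^{2}-10Sb$. The latter value comes from $\sum h_{ijkl}^{2}=S(S+3)(S+9)+3(A-2B)$ together with the rewriting $3(A-2B)=4(2A+B)-5(A+2B)$, the constancy $2A+B=\tfrac12 S^{2}(S+3)$ (which you also use), and the pointwise evaluation $A+2B=\tfrac13\sum_{i,j,k}h_{ijk}^{2}(\lambda_{i}+\lambda_{j}+\lambda_{k})^{2}=12\lambda^{2}b=2Sb$. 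Your weight argument (``$\sum h_{ijkl}^{2}$ is only weight $3$ and cannot produce $(\ref{3.1})$'') fails because dividing the identity by $\lambda^{2}=S/6$ is harmless and because $6a+4b=S(S+3)$ mixes the gradings; so the heavier computation $\Delta(2A+B)=0$ was never forced.

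More seriously, $\Delta(2A+B)=0$ is a \emph{different} scalar identity from the one underlying $(\ref{3.1})$ (which encodes the constancy of $\sum h_{ijk}^{2}$), and you never show that it reduces to $(\ref{3.1})$; the crucial coefficients $\tfrac{26}{9}, \tfrac{7}{18}, -1, \tfrac54$ are asserted, not derived, at exactly the step you yourself flag as the crux. Two concrete obstructions. First, your top-order form $2\sum\lambda_{i}^{2}h_{ijkl}^{2}+\sum\lambda_{i}\lambda_{j}h_{ijkl}^{2}$ agrees with $3\lambda^{2}\sum h_{ijkl}^{2}$ only on components whose index multiset contains at most one $3$ (using $\lambda_{1}=\lambda_{2}=\lambda$, $\lambda_{3}=-2\lambda$; this is why your free part plausibly resembles $x+2y$), but it weights the determined components differently: for the multiset $\{1,1,3,3\}$ one finds total weight $18\lambda^{2}$ on $h_{3311}^{2}$ but $9\lambda^{2}$ on $h_{1133}^{2}$, against the equal multiplicities $3$ and $3$ in $\sum h_{ijkl}^{2}$, and similarly $30\lambda^{2}$ versus the ``uniform'' $12\lambda^{2}$ for $\{1,3,3,3\}$; after substituting the previous lemma's values these discrepancies change the $a,b$-coefficients on the right-hand side. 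Second, the single-gradient cross terms $\sum_{l}\nabla_{l}(h^{2})_{mn}\nabla_{l}(h_{mjk}h_{njk})$ that you list produce contributions linear in \emph{every} free fourth derivative with cubic third-derivative coefficients, e.g.\ $\lambda h_{111}h_{113}h_{1113}$ and $\lambda h_{112}h_{113}h_{1123}$, none of which occur in $(\ref{3.1})$, whose only linear term is $(a+b)\lambda h_{1111}$; you give no cancellation argument. Generically a correct expansion of $\Delta(2A+B)=0$ yields a true identity that is a nontrivial linear combination of $(\ref{3.1})$ and $(\ref{3.2})$ (note $(\ref{3.2})$ is exactly of the shape $x+4a\lambda h_{1111}=\cdots$, so such admixtures are invisible to your top-order bookkeeping), and a combination does not establish $(\ref{3.1})$ itself. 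Your zeroth-order data are fine --- $A=\lambda^{2}(12a+4b)$ and $B=\lambda^{2}(4b-6a)$ are correct and consistent with $2A+B=3\lambda^{2}(6a+4b)=\tfrac12S^{2}(S+3)$ --- but that only verifies the constant, not the identity obtained from its Laplacian.
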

\begin{proof}
\begin{equation*}
\begin{split}
\sum_{i,j,k,l}h_{ijkl}^{2}&=S(S+3)(S+9)+3(A-2B)\\
&=S(S+3)(S+9)+4(2A+B)-5(A+2B)\\
&=S(S+3)(S+9)+2S^{2}(S+3)-5(\sum_{i,j,k}h_{ijk}^{2}\lambda_{i}^{2}+2\sum_{i,j,k}h_{ijk}^{2}\lambda_{i}\lambda_{j})\\
&=3S(S+3)^{2}-\frac{5}{3}\sum_{i,j,k}h_{ijk}^{2}(\lambda_{i}+\lambda_{j}+\lambda_{k})^{2},
\end{split}
\end{equation*}
where
\begin{equation*}
\begin{split}
\sum_{i,j,k}h_{ijk}^{2}(\lambda_{i}+\lambda_{j}+\lambda_{k})^{2}&
=3\sum_{i\neq k}h_{iik}^{2}(2\lambda_{i}+\lambda_{k})^{2}+9\sum_{i}h_{iii}^{2}\lambda_{i}^{2}\\
&=36\lambda^{2}b.
\end{split}
\end{equation*}
Hence we have
\begin{equation*}
\begin{split}
\sum_{i,j,k,l}h_{ijkl}^{2}&=3S(S+3)^{2}-60\lambda^{2}b=3S(S+3)^{2}-10Sb.
\end{split}
\end{equation*}
\begin{equation*}
\begin{split}
&\sum_{i,j,k}h_{ijk1}^{2}
=\sum_{i\neq j\neq k}h_{ijk1}^{2}+3\sum_{i\neq k}h_{iik1}^{2}+\sum_{i}h_{iii1}^{2}\\
&=6h_{1231}^{2}+3(h_{1121}^{2}+h_{1131}^{2}+h_{2211}^{2}+h_{2231}^{2}+h_{3311}^{2})\\
&+(h_{1111}^{2}+h_{2221}^{2}+h_{3331}^{2})\\
&=3(2h_{1123}^{2}+h_{1113}^{2}+h_{2213}^{2})+(h_{1111}^{2}+3h_{1112}^{2})+h_{3331}^{2}\\
&+(h_{2221}^{2}+3h_{2211}^{2}+3h_{3311}^{2})\\
&=3(2h_{1123}^{2}+h_{1113}^{2}+h_{2213}^{2})+4(h_{1111}^{2}+h_{1112}^{2})\\
&+h_{3331}^{2}+6(h_{1111}h_{3311}+h_{3311}^{2}).
\end{split}
\end{equation*}
In the same way, we have
\begin{equation*}
\begin{split}
&\sum_{i,j,k}h_{ijk2}^{2}
=\sum_{i\neq j\neq k}h_{ijk2}^{2}+3\sum_{i\neq k}h_{iik2}^{2}+\sum_{i}h_{iii2}^{2}\\
&=3(2h_{2213}^{2}+h_{1123}^{2}+h_{2223}^{2})+4(h_{1111}^{2}+h_{1112}^{2})\\
&+h_{3332}^{2}+3(2h_{1111}h_{3322}+h_{3311}^{2}+h_{3322}^{2}).
\end{split}
\end{equation*}
\begin{equation*}
\begin{split}
&\sum_{i,j,k}h_{ijk3}^{2}=\sum_{i\neq j\neq k}h_{ijk3}^{2}+3\sum_{i\neq k}h_{iik3}^{2}+\sum_{i}h_{iii3}^{2}\\
&=3(h_{1123}^{2}+h_{2213}^{2})+h_{1113}^{2}+h_{2223}^{2}+3(h_{1133}^{2}+h_{2233}^{2}\\
&+h_{3313}^{2}+h_{3323}^{2})+h_{3333}^{2}\\
&=3(h_{1123}^{2}+h_{2213}^{2})+h_{1113}^{2}+h_{2223}^{2}+\bigg(\frac{2a^{2}}{3\lambda^{2}}+\frac{4}{3\lambda^{2}}ab\bigg)+\frac{4a^{2}}{9\lambda^{2}}\\
&=3(h_{1123}^{2}+h_{2213}^{2})+h_{1113}^{2}+h_{2223}^{2}+\frac{10a^{2}+12ab}{9\lambda^{2}}.
\end{split}
\end{equation*}
Hence we obtain
\begin{equation*}
\begin{split}
&\sum_{i,j,k,l}h_{ijkl}^{2}=\sum_{i,j,k}h_{ijk1}^{2}+\sum_{i,j,k}h_{ijk2}^{2}+\sum_{i,j,k}h_{ijk3}^{2}\\
&=12(h_{1123}^{2}+h_{2213}^{2})+4(h_{1113}^{2}+h_{2223}^{2})+8(h_{1111}^{2}+h_{1112}^{2})\\
&+12h_{1111}h_{3311}+(h_{3331}^{2}+h_{3332}^{2})
+12h_{3311}^{2}+\frac{10a^{2}+12ab}{9\lambda^{2}}\\
&=\bigg[12(h_{1123}^{2}+h_{2213}^{2})+4(h_{1113}^{2}+h_{2223}^{2})\bigg]\\
&+\bigg[8(h_{1111}^{2}+h_{1112}^{2})+\frac{8}{\lambda}h_{1111}(a+b)\bigg]\\
&+\frac{4}{9\lambda^{2}}ab+12\bigg[\frac{2}{3\lambda}(a+b)\bigg]^{2}+\frac{10a^{2}+12ab}{9\lambda^{2}}.
\end{split}
\end{equation*}
Hence, we infer from the above formulas,
\begin{equation*}
\begin{split}
&\frac{4}{\lambda^{2}}x+\frac{8}{\lambda^{2}}y+\frac{4ab+48(a+b)^{2}+10a^{2}+12ab}{9\lambda^{2}}=3S(S+3)^{2}-10Sb,
\end{split}
\end{equation*}
that is,
\begin{align*}
x+2y&=\frac{26}{9}a^{2}+\frac{26}{9}ab+\frac{2}{3}b^{2}-\frac{5}{12}S^2b\\
&=\frac{26}{9}a^{2}+\frac{7}{18}ab-b^{2}+\frac{5}{4}Sb.
\end{align*}
\end{proof}

Furthermore, we have
\begin{lemma}
\label{3.6.}
\begin{equation}
\label{3.2}
x+4a\lambda h_{1111}=-\frac{34}{9}a^{2}-\frac{4}{3}ab+\frac{4}{3}b^{2}+\lambda^{2}(72\lambda^{2}+18)a+\lambda^{2}(40\lambda^{2}+8)b.
\end{equation}
\end{lemma}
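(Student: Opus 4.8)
The plan is to produce a second fourth-order identity, independent of Lemma~\ref{3.5.}, so that the two together control the only fourth derivative not already fixed by the first lemma of this section, namely $h_{1111}$. The natural input is the quantity $2A+B$: since $S$ is constant we have shown $2A+B=\frac12 S^2(S+3)$, so $2A+B$ is itself a constant and hence $\Delta(2A+B)=0$. My intention is to compute this Laplacian directly, at the point $p$ in the adapted frame, and to read off the identity \eqref{3.2} from the vanishing of the result.

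To compute $\Delta(2A+B)$ I would first write $A$ and $B$ in frame-independent form, $A=\sum h_{im}h_{mp}h_{ijk}h_{pjk}$ and $B=\sum h_{im}h_{jp}h_{ijk}h_{mpk}$, which at $p$ reduce to $\sum_{i,j,k}\lambda_i^2h_{ijk}^2$ and $\sum_{i,j,k}\lambda_i\lambda_jh_{ijk}^2$. Expanding $\Delta$ by the Leibniz rule and evaluating with $h_{ij}=\lambda_i\delta_{ij}$ and $\Delta h_{ij}=-(S+3)h_{ij}$, three kinds of terms occur. (i) When both derivatives of $\Delta$ fall on the factors $h_{ijk}h_{pjk}$ one obtains a weighted sum of squared fourth derivatives $\sum w_{ij}\,h_{ijkl}^2$; after substituting the explicit values of $h_{ijkl}$ from the first lemma, the undetermined part of this sum collapses precisely to $x$, while the determined components contribute known multiples of $a^2,ab,b^2$. (ii) The mixed terms, in which one derivative differentiates a curvature factor $h_{im}$ and the other a third-order factor, produce expressions of the shape $\sum(\lambda_i+\lambda_p)h_{ipl}h_{ijkl}h_{pjk}$; evaluated with the first lemma, the only surviving contribution carrying the free datum $h_{1111}$ is of the form $\lambda\,h_{113}\,h_{1111}\,h_{113}$, which is exactly $4a\lambda h_{1111}$. (iii) The terms carrying $\Delta h_{ij}$ and $\Delta h_{ijk}$ (the latter computed from the Ricci formula as $-(S+3)h_{ijk}$ plus curvature corrections) produce the lower-order contributions; using $\lambda^2=\frac S6$ these organize into the coefficients $\lambda^2(72\lambda^2+18)$ and $\lambda^2(40\lambda^2+8)$. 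Setting the total equal to zero and simplifying with $6a+4b=S(S+3)$ yields \eqref{3.2}.

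The main obstacle is step (ii). Every pure sum of squared fourth derivatives is symmetric in the two coincident principal directions $e_1,e_2$, and there $h_{1111}$ can enter only through the trace-determined combination $h_{1122}=-h_{1111}-h_{3311}$, so its linear contribution always appears paired with $h_{3311}\sim(a+b)$ and tends to cancel after symmetrization; this is exactly why such sums alone cannot generate a term in $a\,h_{1111}$. It is the third-order$\times$fourth-order cross terms arising from differentiating the curvature factors in $A$ and $B$ that pair $h_{1111}$ directly with $h_{113}^2=a$, and pinning down their coefficient — together with the careful accounting of the curvature corrections in $\Delta h_{ijk}$ that fix the constants $18$ and $8$ — is the delicate part of the computation. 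Once \eqref{3.2} is established, subtracting it from \eqref{3.1} eliminates $x$ and isolates the desired relation among $h_{1111},\,h_{1112},\,a,\,b$ and $S$.
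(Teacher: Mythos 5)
Your starting point is legitimate --- $f_4=\tfrac12 S^2$ holds pointwise for a trace-free $3\times 3$ shape operator, so $2A+B=\tfrac12 S^2(S+3)$ is indeed constant and $\Delta(2A+B)=0$ is a genuine constraint --- but your claim in step (i), that the second-derivative-squared part of this Laplacian ``collapses precisely to $x$,'' is false, and the failure is structural rather than a matter of bookkeeping. The only contributions to $\Delta(2A+B)$ that are quadratic in fourth derivatives are the terms $2\sum_{i,j,k,l}(2\lambda_i^2+\lambda_i\lambda_j)h_{ijkl}^2$ coming from $|\nabla(\text{third order})|^2$; because the Laplacian sums over \emph{all} directions $l$, this weighted sum contains the slots $h_{1111}^2$, $h_{2222}^2=h_{1111}^2$, $h_{1112}^2$, and $h_{1122}^2=\bigl(h_{1111}+\tfrac{2}{3\lambda}(a+b)\bigr)^2$ with strictly positive weights (e.g.\ $2(2\lambda_1^2+\lambda_1^2)=6\lambda^2$), whereas $x$ by definition contains only the four squares with a single $3$-index. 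Every other term in the expansion --- your (ii) and (iii) --- is at most linear in fourth derivatives, so nothing can cancel these quadratics. Consequently $\Delta(2A+B)=0$ necessarily produces an identity of the same type as \eqref{3.1}, carrying a $y$-like term $\lambda^2(h_{1111}^2+h_{1112}^2)$, and can never reduce to \eqref{3.2}, which is linear in $h_{1111}$ and free of $h_{1112}$. This is a fatal gap quite apart from the fact that the coefficient matchings in (i)--(iii) are asserted to agree with the target rather than computed, with the hardest part, which you yourself flag, left undone.

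The paper avoids exactly this obstruction by never forming a full Laplacian of a third-order quantity. It differentiates $S=\mathrm{const}$ three times to obtain $3\lambda h_{33klm}=\sum_{i,j}(h_{ijm}h_{ijkl}+h_{ijk}h_{ijlm}+h_{ijl}h_{ijkm})$ and contracts with $h_{klm}$; separately it applies $\nabla_3\nabla_3$ --- only the distinguished direction, twice --- to $\sum_{i,j,k}h_{ijk}^2=S(S+3)$, so that the quadratic fourth-order content is confined to $\sum_{i,j,k}h_{ijk3}^2$, which by the first lemma of the section equals $x/\lambda^2$ plus known $a,b$-terms: the slots $h_{1111},h_{1112}$ never enter quadratically. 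The commutator $\sum_{i,j,k}h_{ijk}(h_{33ijk}-h_{ijk33})$ is then evaluated through Ricci identities to give $(72\lambda^2+18)a+(40\lambda^2+8)b$, and the linear $h_{1111}$-term comes from the quintic contraction $\sum h_{ijk}h_{klm}h_{ijlm}=4ah_{1111}+\frac{8a^2-4b^2}{3\lambda}$, computed using the explicit values of $h_{33kl}$, $h_{1133}$, $h_{3313}$, $h_{3323}$. If you wish to salvage your idea, replace $\Delta(2A+B)=0$ by $\nabla_3\nabla_3(2A+B)=0$: restricting to the $e_3$-direction removes the offending quadratic slots; but you would then still need the thrice-differentiated-$S$ identity to handle the resulting fifth-order terms, at which point you have essentially reconstructed the paper's argument with extra weights.
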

\begin{proof}
Since $S=\sum_{i,j}h_{ij}^{2}$ is constant, we get
\begin{equation*}
\begin{split}
0&=\sum_{i,j}(h_{ij}^{2})_{klm}\\
&=2\sum_{i,j}(h_{ij}h_{ijklm}+h_{ijm}h_{ijkl}+h_{ijk}h_{ijlm}+h_{ijl}h_{ijkm}).
\end{split}
\end{equation*}
Since
\begin{equation*}
\sum_{i,j}h_{ij}h_{ijklm}=-3\lambda h_{33klm},
\end{equation*}
we have
\begin{equation*}
3\lambda h_{33klm}=\sum_{i,j}h_{ijm}h_{ijkl}+\sum_{i,j}h_{ijk}h_{ijlm}+\sum_{i,j}h_{ijl}h_{ijkm}.
\end{equation*}
Hence,
\begin{equation*}
\sum_{k,l,m}h_{klm}h_{33klm}=\frac{1}{\lambda}\sum_{i,j,k,l,m}h_{ijk}h_{klm}h_{ijlm}.
\end{equation*}
On the other hand, we have
\begin{equation*}
\begin{split}
0&=\sum_{i,j,k}(h_{ijk}^{2})_{33}\\
&=2\sum_{i,j,k}(h_{ijk}h_{ijk33}+h_{ijk3}^{2}).
\end{split}
\end{equation*}

\begin{equation*}
\begin{split}
\sum_{i,j,k}h_{ijk}(h_{33ijk}-h_{ijk33})&=\sum_{i,j,k}h_{ijk}h_{33ijk}+\sum_{i,j,k}h_{ijk3}^{2}\\
&=\frac{1}{\lambda}\sum_{i,j,k,l,m}h_{ijk}h_{klm}h_{ijlm}+\frac{x}{\lambda^{2}}+\frac{10a^{2}+12ab}{9\lambda^{2}}.
\end{split}
\end{equation*}

Since
\begin{equation*}
\begin{split}
&\sum_{i,j,k}h_{ijk}(h_{33ijk}-h_{ijk33})\\
&=\sum_{i,j,k}h_{ijk}[h_{3i3jk}-h_{ijk33}]\\
&=\sum_{i,j,k}h_{ijk}\bigg[(h_{3ij3}+\sum_{m}h_{mi}R_{m33j}+\sum_{m}h_{3m}R_{mi3j})_{k}-(h_{ij3k}+2\sum_{m}h_{mj}R_{mik3})_{3}\bigg]\\
&=\sum_{i,j,k}h_{ijk}[h_{3ij3k}-h_{ij3k3}+\sum_{m}h_{mik}R_{m33j}+\sum_{m}h_{3mk}R_{mi3j}-2\sum_{m}h_{mj3}R_{mik3}]\\
&+\sum_{i,j,k,m}h_{ijk}h_{mi}(h_{m3}h_{3j}-h_{mj}h_{33})_{k}+\sum_{i,j,k,m}h_{ijk}h_{3m}(h_{m3}h_{ij}-h_{mj}h_{i3})_{k}\\
&-2\sum_{i,j,k,m}h_{ijk}h_{mj}(h_{mk}h_{i3}-h_{m3}h_{ik})_{3}\\
&=\sum_{i,j,k}h_{ijk}[2\sum_{m}h_{mij}R_{m33k}+5\sum_{m}h_{3mj}R_{mi3k}]\\
&+\sum_{i,j,k,m}h_{ijk}h_{mi}(h_{m3k}h_{3j}+h_{m3}h_{3jk}-h_{mjk}h_{33})\\
&+\sum_{i,j,k,m}h_{ijk}h_{3m}(h_{m3}h_{ijk}-h_{mjk}h_{i3}-h_{mj}h_{i3k}+h_{m3k}h_{ij})\\
&-2\sum_{i,j,k,m}h_{ijk}h_{mj}(h_{mk3}h_{i3}-h_{m3}h_{ik3})\\
&=\sum_{i,j,k,m}[2h_{ijk}h_{mij}(-1+\lambda_{k}\lambda_{3})(\delta_{k3}\delta_{3m}-\delta_{mk}\delta_{33})\\
&+5h_{ijk}h_{3mj}(-1+\lambda_{i}\lambda_{k})(\delta_{m3}\delta_{ik}-\delta_{mk}\delta_{i3})]
+\sum_{i,k}\lambda_{3}\lambda_{i}h_{i3k}^{2}-\sum_{i,j,k}\lambda_{3}\lambda_{i}h_{ijk}^{2}\\
&+\sum_{j,k}\lambda_{3}^{2}h_{3jk}^{2}+\sum_{i,j,k}h_{ijk}^2\lambda_3^2-\sum_{j,k}h_{3jk}^2\lambda_3^2
-\sum_{i,k}\lambda_{3}^{2}h_{i3k}^{2}
-2\sum_{j,k}\lambda_{3}\lambda_{j}h_{3jk}^{2}+2\sum_{i,k}\lambda_{3}^{2}h_{i3k}^{2}\\
&=[2\sum_{i,j}h_{ij3}^{2}(-1+\lambda_{3}^{2})-2\sum_{i,j,k}h_{ijk}^{2}(-1+\lambda_{k}\lambda_{3})
-5\sum_{j,k}h_{3jk}^{2}(-1+\lambda_{3}\lambda_{k})]\\
&-\sum_{i,j}\lambda_{3}\lambda_{i}h_{ij3}^{2}-\sum_{i,j,k}\lambda_{3}\lambda_{i}h_{ijk}^{2}
+\sum_{i,j,k}\lambda_{3}^{2}h_{ijk}^{2}+\sum_{i,j}\lambda_{3}^{2}h_{ij3}^{2}\\
&=[2(-1+4\lambda^{2})(2a)-2(-(6a+4b)-2\lambda(4\lambda b))+5(1+2\lambda^{2})(2a)]\\
&+4a\lambda^{2}+2\lambda(4\lambda b)+4\lambda^{2}(6a+4b)+8\lambda^{2}a\\
&=(72\lambda^{2}+18)a+(40\lambda^{2}+8)b.
\end{split}
\end{equation*}
\begin{equation*}
\begin{split}
&\sum_{i,j,k,l,m}h_{ijk}h_{klm}h_{ijlm}\\
&=\sum_{k,l,m}h_{klm}(h_{11k}h_{11lm}+h_{22k}h_{22lm}+2h_{12k}h_{12lm}+2h_{13k}h_{13lm}+2h_{23k}h_{23lm})\\
&=\sum_{k,l,m}h_{11k}h_{klm}(h_{11lm}-h_{22lm})+\sum_{l,m}2(h_{112}h_{1lm}-h_{111}h_{2lm})h_{12lm}\\
&+\sum_{l,m}2h_{113}h_{1lm}h_{13lm}-\sum_{l,m}2h_{113}h_{2lm}h_{23lm}\\
&=\sum_{k}h_{11k}[h_{k11}(h_{1111}-h_{2211})+h_{k22}(h_{1122}-h_{2222})+2h_{k13}(h_{1113}-h_{2213})\\
&-2h_{k23}(h_{1123}-h_{2223}))]+2h_{k12}(h_{1112}-h_{2212})\\
&+4h_{112}(h_{112}h_{1212}+h_{113}h_{1213}+h_{111}h_{1211}+h_{122}h_{1222})\\
&-4h_{111}(h_{212}h_{1212}+h_{211}h_{1211}+h_{222}h_{1222}+h_{223}h_{1223})\\
&+2h_{113}[h_{111}h_{1311}+h_{113}(h_{1313}+h_{1331})+2h_{112}h_{1312}+h_{122}h_{1322}]\\
&-2h_{113}[h_{222}h_{2322}+2h_{212}h_{2312}+h_{223}(h_{2323}+h_{2332})+h_{211}h_{2311}]\\
&=(a+b)(h_{1111}-h_{2211})+\sum_{k}h_{11k}^{2}(h_{1111}-h_{2211})\\
&+4bh_{1122}+2a(h_{1133}+h_{3311}+h_{2233}+h_{3322})\\
&+4h_{111}h_{113}h_{1113}-4h_{112}h_{223}h_{2223}+4h_{112}h_{113}h_{1123}-4h_{113}h_{221}h_{2213}\\
&=2(a+b)(h_{1111}-h_{2211})+4bh_{1122}\\
&+4h_{113}(h_{111}h_{1113}+h_{112}h_{2223}+h_{112}h_{1123}+h_{111}h_{2213})\\
&+2a(h_{1133}+h_{3311}+h_{2233}+h_{3322})\\
&=2(a+b)(h_{1111}-h_{2211})+4bh_{1122}\\
&-4h_{113}(h_{111}h_{3313}+h_{112}h_{3323})+2a(h_{1133}+h_{3311}+h_{2233}+h_{3322})\\
&=2(a+b)(2h_{1111}+h_{3311})-4b(h_{1111}+h_{3311})\\
&-4h_{113}(h_{111}\cdot \frac{2}{3\lambda}h_{111}h_{113}+h_{112}\cdot \frac{2}{3\lambda}h_{112}h_{113})\\
&+2a\bigg(-\frac{a}{3\lambda}+\frac{2}{3\lambda}(a+b)-\frac{a}{3\lambda}+\frac{2}{3\lambda}(a+b)\bigg)\\
&=4ah_{1111}+2(a-b)\cdot\frac{2}{3\lambda}(a+b)
-\frac{8}{3\lambda}a(h_{111}^{2}+h_{112}^{2})
-\frac{4a^{2}}{3\lambda}+\frac{8a}{3\lambda}(a+b)\\
&=4ah_{1111}+\frac{8a^{2}-4b^{2}}{3\lambda}.
\end{split}
\end{equation*}
Hence, we have
\begin{equation*}
\begin{split}
(72\lambda^{2}+18)a+(40\lambda^{2}+8)b
=&\frac{4ah_{1111}}{\lambda}+\frac{8a^{2}-4b^{2}}{3\lambda^{2}}+\frac{x}{\lambda^{2}}+\frac{10a^{2}+12ab}{9\lambda^{2}}\\
x+4a\lambda h_{1111}=&\lambda^{2}(72\lambda^{2}+18)a+\lambda^{2}(40\lambda^{2}+8)b-\frac{34a^{2}+12ab-12b^{2}}{9}\\
=&-\frac{34}{9}a^2-\frac{4}{3}ab+\frac{4}{3}b^{2}+\lambda^{2}(72\lambda^{2}+18)a+\lambda^{2}(40\lambda^{2}+8)b
\end{split}
\end{equation*}
\end{proof}
Since equation $(\ref{3.1})$ and $(\ref{3.2})$, we have from $6a+4b=S(S+3)$, $6\lambda^2=S$
\begin{equation}
\label{3.3}
\begin{aligned}
&2\lambda^{2}(h_{1111}^{2}+h_{1112}^{2})-2(a-b)\lambda h_{1111}\\
&=\frac{60}{9}a^{2}+\frac{31}{18}ab-\frac{7}{3}b^{2}+\frac{5}{4}Sb-\lambda^{2}(72\lambda^{2}+18)a-\lambda^{2}(40\lambda^{2}+8)b\\
&=\frac{20}{3}a^{2}+\frac{31}{18}ab-\frac{7}{3}b^{2}+\frac{5}{4}Sb-\frac{S}{6}(12S+18)a-\frac{S}{6}(\frac{20}{3}S+8)b\\
&=\frac{20}{3}a^{2}+\frac{31}{18}ab-\frac{7}{3}b^{2}-2S(S+3)a+3Sa-\frac{10}{9}S(S+\frac{3}{40})b\\
&=\frac{20}{3}a^{2}+\frac{31}{18}ab-\frac{7}{3}b^{2}-2(6a+4b)a+3Sa-\frac{10}{9}S(S+3-\frac{117}{40})b\\
&=-\frac{16}{3}a^{2}-\frac{233}{18}ab-\frac{61}{9}b^{2}+S(3a+\frac{13}{4}b),
\end{aligned}
\end{equation}
According to
\begin{equation*}
\begin{split}
2\lambda^{2}(h_{1111}^{2}+h_{1112}^{2})-2(a-b)\lambda h_{1111}&\geq -\frac{(a-b)^{2}}{2}
\end{split}
\end{equation*}
 we obtain
\begin{equation}
-\frac{29}{6}a^{2}-\frac{251}{18}ab-\frac{113}{18}b^{2}+S(3a+\frac{13}{4}b)\geq 0.
\end{equation}
Since
$$
\begin{aligned}
&-\frac{29}{6}a^{2}-\frac{58}{18}ab-\frac{13}{2}ab-\frac{13}{3}b^2\\
&=-\frac{29}{36}a(6a+4b)-\dfrac{13}{12}(4b +6a)b=-\frac{29}{36}S(S+3)a-\frac{13}{12}S(S+3)b,
\end{aligned}
$$
we have from (3.4)
$$
\bigl(\frac{21}{36}-\frac{29}{36}S\bigl)Sa  -\frac{76}{18}ab-\frac{35}{18}b^{2}-\frac{13}{12}S^2b\geq 0.
$$

Hence we have $S\leq \dfrac{21}{29}$

\section{Proof of  theorem 1.1}
In this section, we will give a  proof of the theorem 1.1.

\vskip3mm
\noindent
{\it Proof of theorem 1.1}.
We choose a local frame field $\{e_{1},e_{2},e_{3},e_{4}\}$ such that at any point $p$,
\begin{equation*}
h_{ij}=\lambda_{i}\delta_{ij}.
\end{equation*}
Since S is constant, we notice that the sectional curvature  is bounded from below from Gauss equations and $f_{3}$ is bounded.
Hence by making using of the  Generalized Maximum Principle due to Omori \cite{o},
 there exists a sequence of $\{p_{k}\}_{k=1}^{\infty}\subset M^{3}$ such that
\begin{equation*}
\lim_{k \to \infty}f_{3}(p_{k})=\underset{M^{3}}{\sup}\ f_{3},\
\lim_{k \to \infty}|\nabla f_{3}(p_{k})|=0,\
\lim_{k \to \infty}\sup\nabla_l\nabla_l f_{3}(p_{k})\leq 0, \text{\rm for} \ l=1, 2, 3.
\end{equation*}
Since S is constant,
\begin{equation*}
\begin{split}
&\sum_{i,j,k}h_{ijk}^{2}=S(S+3),\\
&\sum_{i,j,k,l}h_{ijkl}^{2}=S(S+3)(S+9)+3(A-2B),
\end{split}
\end{equation*}
we know that , for any $i ,j ,k ,l$,   $\{\lambda_{i}(p_{k})\}$,  \ $\{h_{ijk}(p_{k})\}$ and $\{h_{ijkl}(p_{k})\}$ are bounded sequences respectively.
Thus, we can assume, if necessary,  by taking  a subsequences of $\{p_{m}\}$,
\begin{equation*}
\lim_{m \to \infty} \lambda_{i}(p_{{m}})=\hat{\lambda}_{i},\
\lim_{m \to \infty} h_{ijk}(p_{{m}})=\hat{h}_{ijk},\
\lim_{m \to \infty} h_{ijkl}(p_{{m}})=\hat{h}_{ijkl},\ \ \forall i,j,k,l.
\end{equation*}
From now on, all the computations are considered for $\hat{\lambda}_{i}, \hat{h}_{ijk}$ and $\hat{h}_{ijkl}$. For simple, we omit $\hat{}$.\par

\subsection*{Case 1.  the number of distinct principal curvatures is two}
By the same proof as in the section 3, we get
$$S\leq \dfrac{21}{29}.
$$
\subsection*{Case 2. all three principal curvatures are distinct}
If $f_{3}$ is  constant, then $M^{3}$ is  isoparametric and $S=0$. This is impossible.
From now on, we suppose $f_{3}$ is not  constant.  We will derive a contradiction.\par
We suppose that $\lambda_{1}<\lambda_{2}<\lambda_{3}$. We assume $\sup f_{3}\neq 0$,
otherwise we use $\inf f_{3}\neq 0$.
\begin{lemma}
\label{4.1.}
\begin{equation*}
\begin{split}
&h_{iik}=0,\mbox{ for any $i,k$}.\\
&h_{123}^{2}=\frac{S(S+3)}{6}.
\end{split}
\end{equation*}
\end{lemma}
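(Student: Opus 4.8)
The plan is to use the Generalized Maximum Principle applied to $f_3$ at the sequence $\{p_m\}$ and extract algebraic consequences from conditions (2) and (3) of Theorem~2.2. Since we are in Case~2 with three distinct principal curvatures $\lambda_1<\lambda_2<\lambda_3$ and $\sup f_3\neq 0$, the idea is that the gradient condition $\lim|\nabla f_3|=0$ forces the first derivatives $h_{iik}$ to vanish in the limit, while the scalar curvature constraint $\sum_{i,j,k}h_{ijk}^2=S(S+3)$ then pins down the remaining off-diagonal derivative $h_{123}$.

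First I would compute $\nabla_k f_3$ explicitly. Since $f_3=\sum h_{ij}h_{jk}h_{ki}$ and the frame diagonalizes the second fundamental form at the limit point, differentiating gives $\tfrac13\nabla_k f_3=\sum_{i}\lambda_i^2 h_{iik}$ up to the symmetric combinatorics; more precisely one obtains an expression of the form $\sum_i \lambda_i^2 h_{iik}$ in each direction $e_k$. The vanishing of $|\nabla f_3|$ yields, for each $k$, the three equations $\sum_i \lambda_i^2 h_{iik}=0$. Combined with the two linear relations coming from $\sum_i h_{iik}=0$ (trace-free, from minimality) and $\sum_i \lambda_i h_{iik}=0$ (which follows from $S$ being constant, i.e.\ $\nabla_k S=2\sum_{i,j}h_{ij}h_{ijk}=2\sum_i\lambda_i h_{iik}=0$), I would have for each fixed $k$ a linear system in the three unknowns $h_{11k},h_{22k},h_{33k}$ with coefficient matrix the Vandermonde-type matrix built from $1,\lambda_i,\lambda_i^2$. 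Because $\lambda_1,\lambda_2,\lambda_3$ are \emph{distinct}, this Vandermonde determinant is nonzero, so the only solution is $h_{iik}=0$ for all $i,k$. This establishes the first claim.

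For the second claim, I would substitute $h_{iik}=0$ into the identity $\sum_{i,j,k}h_{ijk}^2=S(S+3)$. The left-hand side is a sum over all index triples; using the symmetry $h_{ijk}=h_{ikj}$ (Codazzi) and separating the surviving terms, the only nonzero contributions are the fully off-diagonal ones, namely the six permutations of $h_{123}$, giving $6h_{123}^2=S(S+3)$, hence $h_{123}^2=\tfrac{S(S+3)}{6}$.

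The main obstacle I anticipate is bookkeeping in the first step: correctly accounting for all the combinatorial multiplicities when differentiating $f_3$ and when expanding $\sum h_{ijk}^2$, and verifying carefully that the three relations (from minimality, from constancy of $S$, and from $\nabla f_3=0$) really are the linear independent conditions whose Vandermonde structure forces $h_{iik}=0$. One must be careful that the relation $\sum_i\lambda_i h_{iik}=0$ indeed follows from $\nabla S=0$ and that the cubic symmetric function derivative genuinely produces the weights $\lambda_i^2$ rather than some mixed quantity; I would double-check this by writing $\tfrac13 f_3=\tfrac13\sum_i\lambda_i^3$ in the diagonal frame and differentiating with the connection terms $\nabla_k h_{ii}=h_{iik}$, confirming the coefficient pattern $1,\lambda_i,\lambda_i^2$ that makes the system nonsingular precisely when the $\lambda_i$ are pairwise distinct.
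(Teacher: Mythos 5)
Your proposal is correct and matches the paper's own proof essentially step for step: the paper derives the same three linear relations $\sum_i h_{iik}=0$, $\sum_i \lambda_i h_{iik}=0$ (from minimality and constancy of $S$) and $\sum_i \lambda_i^2 h_{iik}=0$ (from $\lim_{k\to\infty}|\nabla f_3(p_k)|=0$), then uses the distinctness of the $\lambda_i$ (your Vandermonde observation) to force $h_{iik}=0$, and finally reads off $6h_{123}^2=S(S+3)$ from $\sum_{i,j,k}h_{ijk}^2=S(S+3)$. Your cautionary checks on the coefficient pattern $1,\lambda_i,\lambda_i^2$ and the multiplicity $6$ are exactly the right points to verify, and they go through as you expect.
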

\begin{proof}
Since $\sum_{i}h_{ii}=0$ and $S=\sum_{i,j}h_{ij}^{2}$ is constant, we have
\begin{equation*}
\sum_{i}h_{iik}=0,\ \sum_{i}h_{iik}\lambda_{i}=0.
\end{equation*}
Since $\lim_{k \to \infty}|\nabla f_{3}(p_{k})|=0$, we have
\begin{equation*}
\sum_{i}h_{iik}\lambda_{i}^{2}=0.
\end{equation*}
Since $\lambda_{i}\neq\lambda_{j}$ for $i\neq j$, we have
$h_{iik}=0$ for any $i,k$. From
$$S(S+3)=\sum_{i,j,k}h_{ijk}^{2}=6h_{123}^{2},
$$
we have
$$h_{123}^{2}=\frac{S(S+3)}{6}.
$$
\end{proof}
\begin{lemma}
\label{4.2.}
\begin{equation*}
h_{iijk}=h_{iiik}=h_{kiii}=0, \ \mbox{for}\  i\neq j\neq k.
\end{equation*}
\end{lemma}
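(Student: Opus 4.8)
The plan is to extract three families of pointwise differential identities, valid for the limiting quantities, and then separate the fourth-order derivatives according to whether their four indices take three distinct values or only two. Since $M^3$ is minimal, $\operatorname{tr}h=\sum_i h_{ii}\equiv 0$, so differentiating twice gives $\sum_i h_{iikl}=0$ for all $k,l$. Since $S$ is constant, differentiating $\sum_{i,j}h_{ij}^2$ twice and using $h_{ij}=\lambda_i\delta_{ij}$ gives $\sum_i\lambda_i h_{iikl}=-\sum_{i,j}h_{ijk}h_{ijl}$. Finally, differentiating $\sum_{i,j,k}h_{ijk}^2=S(S+3)$ once gives $\sum_{i,j,k}h_{ijk}h_{ijkl}=0$. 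By Lemma~\ref{4.1.} the only nonzero third-order derivatives at the limit point are the permutations of $h_{123}$, with $h_{123}^2=S(S+3)/6\neq 0$. Substituting this into the last identity and using that $h_{ijkl}$ is totally symmetric in its first three indices (Codazzi), I get $6h_{123}\,h_{123l}=0$, hence $h_{123l}=0$ for $l=1,2,3$. The same substitution shows that $-\sum_{i,j}h_{ijk}h_{ijl}$ vanishes whenever $k\neq l$, since a nonzero product $h_{ijk}h_{ijl}$ would force both $\{i,j,k\}$ and $\{i,j,l\}$ to equal $\{1,2,3\}$ and hence $k=l$.

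Next I would treat the components $h_{iijk}$ whose indices take all three distinct values. Using the total symmetry of $h_{ijkl}$ in its first three slots together with the commutation formula $h_{ijkl}-h_{ijlk}=(\lambda_i-\lambda_j)(-1+\lambda_i\lambda_j)(\delta_{ik}\delta_{jl}-\delta_{il}\delta_{jk})$, one rearranges the index multiset $\{i,i,j,k\}$ so that a permutation of $\{1,2,3\}$ occupies the first three slots. Each swap needed in this rearrangement either has two equal indices in its first two slots or has its last two indices differing, as a set, from its first two, so the curvature correction vanishes at every step; thus $h_{iijk}=\pm h_{123l}=0$.

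The remaining and most delicate components are $h_{iiik}$, with three equal indices and $i\neq k$. Here the commutation formula is useless: any admissible swap has two equal indices in its first two slots, so no correction is ever produced and these terms stay decoupled from the $h_{123l}$ already shown to vanish. The key step is therefore to pin them down linearly by combining the minimality identity with the identity coming from constancy of $S$. Fixing distinct $i,k$ and evaluating both second-order identities at $(k,l)=(i,k)$, then discarding the terms with three distinct index values already known to be zero, I obtain the homogeneous system $h_{iiik}+h_{kkki}=0$ and $\lambda_i h_{iiik}+\lambda_k h_{kkki}=0$, whose coefficient matrix has determinant $\lambda_k-\lambda_i\neq 0$; hence $h_{iiik}=h_{kkki}=0$. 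Finally $h_{kiii}=h_{iiik}$ follows from symmetry in the first three indices followed by one correction-free swap. The main obstacle is exactly this last family: it is invisible to the symmetry and commutation machinery, and the whole point is to realize that the trace identity from minimality and the identity from constancy of $S$ together form a nondegenerate linear system in the two surviving unknowns.
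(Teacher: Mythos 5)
Your proposal is correct and follows essentially the same route as the paper: you differentiate $\sum_{i,j,k}h_{ijk}^{2}=S(S+3)$ to get $h_{123l}=0$ (hence $h_{iijk}=0$ for three distinct values), and then combine the traced identities $\sum_{i}h_{iijk}=0$ and $\sum_{i}\lambda_{i}h_{iijk}=0$ for $j\neq k$ into the $2\times 2$ system $h_{jjjk}+h_{kkjk}=0$, $\lambda_{j}h_{jjjk}+\lambda_{k}h_{kkjk}=0$, which is nondegenerate since $\lambda_{j}\neq\lambda_{k}$. Your explicit checks that every curvature correction vanishes in the index swaps and that the cross term $\sum_{i,j}h_{ijk}h_{ijl}$ vanishes for $k\neq l$ merely make precise steps the paper leaves implicit.
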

\begin{proof}
From $\sum_{i,j,k}h_{ijk}^{2}=S(S+3)$, we have $h_{123l}=0$ for any $l$. i.e.
\begin{equation}
\label{4.1}
h_{iijk}=0,\ \mbox{for}\ i\neq j\neq k.
\end{equation}
Since $\sum_{i}h_{ii}=0$ and $S=\sum_{i,j}h_{ij}^{2}$ is constant, we have
\begin{equation*}
\sum_{i}h_{iijk}=0\ ,\ \sum_{i}h_{iijk}\lambda_{i}=0.
\end{equation*}
For $j\neq k$,  using the equation $(\ref{4.1})$, we have
\begin{equation}
\label{4.2}
h_{jjjk}+h_{kkjk}=0\ ,\ \sum_{i}h_{iijk}\lambda_{i}=0\ ,\ \mbox{for}\ j\neq k
\end{equation}
From equation $(\ref{4.2})$, we have $h_{jjjk}=h_{kkjk}=0$ for $j\neq k$.
\end{proof}
\begin{lemma}
\label{4.3.}
\begin{equation*}
\sum_{i,k}h_{iikk}^{2}+2\sum_{i\neq k}h_{iikk}^{2}=3S(S+3)^{2}.
\end{equation*}
\begin{proof}
\begin{equation*}
\sum_{i,j,k,l}h_{ijkl}^{2}=S(S+3)(S+9)+3(A-2B),
\end{equation*}
where,
\begin{equation*}
\begin{split}
3(A-2B)&=6h_{123}^{2}(\lambda_{1}^{2}+\lambda_{2}^{2}+\lambda_{3}^{2}-2\lambda_{1}\lambda_{2}-2\lambda_{2}\lambda_{3}-2\lambda_{3}\lambda_{1})\\
&=2S^{2}(S+3),
\end{split}
\end{equation*}
\begin{equation*}
\begin{split}
\sum_{i,j,k,l}h_{ijkl}^{2}&=\underset{l}{\sum_{i\neq j\neq k}}h_{ijkl}^{2}+3\underset{l}{\sum_{i\neq k}}h_{iikl}^{2}+\sum_{i,l}h_{iiil}^{2}\\
&=3\sum_{i\neq k}h_{iikk}^{2}+\sum_{i}h_{iiii}^{2}\\
&=\sum_{i,k}h_{iikk}^{2}+2\sum_{i\neq k}h_{iikk}^{2}.
\end{split}
\end{equation*}
Thus,
$$
\sum_{i,k}h_{iikk}^2+2\sum_{i\neq k}h_{iikk}^2=S(S+3)(S+9)+2S^2(S+3)=3S(S+3)^2.
$$
This proves Lemma $\ref{4.3.}$.
\end{proof}
\end{lemma}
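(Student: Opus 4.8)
The plan is to match two independent evaluations of $\sum_{i,j,k,l}h_{ijkl}^{2}$. Since $S$ is constant we may use the identity $\sum_{i,j,k,l}h_{ijkl}^{2}=S(S+3)(S+9)+3(A-2B)$ recorded among the basic formulas, so it suffices to compute $3(A-2B)$ from Lemma~\ref{4.1.} and, separately, to reduce the left-hand side to the two curvature sums appearing in the statement using Lemma~\ref{4.2.}.

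First I would evaluate $3(A-2B)$. By Lemma~\ref{4.1.} the only nonvanishing third-order components are the six permutations of $h_{123}$, each with $h_{ijk}^{2}=h_{123}^{2}=\tfrac{S(S+3)}{6}$. Inserting this into $A=\sum_{i,j,k}\lambda_{i}^{2}h_{ijk}^{2}$ and $B=\sum_{i,j,k}\lambda_{i}\lambda_{j}h_{ijk}^{2}$, and using the minimality relations $\sum_{i}\lambda_{i}=0$ (so that $\lambda_{1}\lambda_{2}+\lambda_{2}\lambda_{3}+\lambda_{3}\lambda_{1}=-\tfrac{S}{2}$) together with $\sum_{i}\lambda_{i}^{2}=S$, I expect $A=2Sh_{123}^{2}$ and $B=-Sh_{123}^{2}$, hence $3(A-2B)=12Sh_{123}^{2}=2S^{2}(S+3)$.

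Next I would reduce the left-hand side by sorting $\sum_{i,j,k,l}h_{ijkl}^{2}$ according to the coincidence pattern of the first three indices, exploiting that $h_{ijkl}$ is symmetric in $i,j,k$ (Codazzi). The group with $i,j,k$ all distinct forces $\{i,j,k\}=\{1,2,3\}$, and $h_{123l}=0$ for every $l$ (from the proof of Lemma~\ref{4.2.}), so it contributes nothing. In the group with exactly two of $i,j,k$ equal, represented by $3\sum_{i\neq k,l}h_{iikl}^{2}$, I would run through the three values of the differentiation index $l$: for $l$ equal to the remaining distinct value one has $h_{iikl}=0$ by $h_{iijk}=0$; for $l=i$ one has $h_{iiki}=h_{kiii}=0$ after reordering the first three indices; and only $l=k$ survives, giving $3\sum_{i\neq k}h_{iikk}^{2}$. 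In the group with $i=j=k$, represented by $\sum_{i,l}h_{iiil}^{2}$, the relation $h_{iiik}=0$ for $k\neq i$ leaves only $l=i$, giving $\sum_{i}h_{iiii}^{2}$. Splitting the diagonal from the off-diagonal part, $3\sum_{i\neq k}h_{iikk}^{2}+\sum_{i}h_{iiii}^{2}=\sum_{i,k}h_{iikk}^{2}+2\sum_{i\neq k}h_{iikk}^{2}$.

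Equating the two evaluations then yields $\sum_{i,k}h_{iikk}^{2}+2\sum_{i\neq k}h_{iikk}^{2}=S(S+3)(S+9)+2S^{2}(S+3)=S(S+3)(3S+9)=3S(S+3)^{2}$, which is the assertion. I expect the only delicate point to be the bookkeeping in the reduction of the left-hand side: because $h_{ijkl}$ is symmetric only in its first three indices and not in the differentiation index $l$, each position of $l$ must be examined separately and the appropriate vanishing statement from Lemma~\ref{4.2.} invoked in each case; the remaining steps are routine algebraic simplifications.
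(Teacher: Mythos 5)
Your proposal is correct and takes essentially the same route as the paper's own proof: both equate the constant-$S$ identity $\sum_{i,j,k,l}h_{ijkl}^{2}=S(S+3)(S+9)+3(A-2B)$, with $3(A-2B)=12Sh_{123}^{2}=2S^{2}(S+3)$ from Lemma 4.1, against the decomposition of the fourth-order sum by coincidence patterns of the first three indices, using Lemma 4.2 to eliminate everything except the $h_{iikk}$ and $h_{iiii}$ terms. Your explicit case-by-case check of the position of the derivative index $l$ (and the separate computation $A=2Sh_{123}^{2}$, $B=-Sh_{123}^{2}$) simply fills in bookkeeping the paper leaves implicit.
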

\begin{lemma}
\label{4.4.}
\begin{equation*}
f_{3}> 0,\
-\sqrt{\frac{S}{2}}<\lambda_{1}<-\sqrt{\frac{S}{6}},\
-\sqrt{\frac{S}{6}}<\lambda_{2}<0.
\end{equation*}
\end{lemma}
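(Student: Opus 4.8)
The plan is to extract the sign of $f_3$ from the elliptic identity for $f_3$ at the limit point produced by the Generalized Maximum Principle, and then to convert the single inequality $f_3>0$ into the claimed pinching of $\lambda_1,\lambda_2$ using nothing more than the two algebraic constraints $\lambda_1+\lambda_2+\lambda_3=0$, $\lambda_1^2+\lambda_2^2+\lambda_3^2=S$ and the strict ordering $\lambda_1<\lambda_2<\lambda_3$.

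First I would compute $C=\sum_{i,j,k}\lambda_i h_{ijk}^2$ at the limit. By Lemma \ref{4.1.} every $h_{iik}$ vanishes, so the only nonzero components of $h_{ijk}$ are the six permutations of $h_{123}$; grouping the sum by its first index and using $\lambda_1+\lambda_2+\lambda_3=0$ gives $C=2h_{123}^2(\lambda_1+\lambda_2+\lambda_3)=0$. Feeding this into the identity $\frac13\Delta f_3=-(3+S)f_3+2C$ from Section 2 leaves $\frac13\Delta f_3=-(3+S)f_3$ at the limit. The third clause of the maximum principle forces $\Delta f_3\le 0$ there, and $3+S>0$, so $f_3\ge 0$; since the limit value of $f_3$ equals $\sup f_3\neq 0$ by construction, we obtain $f_3>0$.

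Next I would use that $\lambda_1+\lambda_2+\lambda_3=0$ makes $f_3=\lambda_1^3+\lambda_2^3+\lambda_3^3=3\lambda_1\lambda_2\lambda_3$. The ordering together with the vanishing trace gives $\lambda_1<0<\lambda_3$, hence $\lambda_1\lambda_3<0$, and therefore $f_3>0$ forces $\lambda_2<0$. This is the only place analysis enters; the remaining bounds are elementary. Regarding $\{\lambda_2,\lambda_3\}$ as the roots of $t^2+\lambda_1t+(\lambda_1^2-\frac{S}{2})=0$ and $\{\lambda_1,\lambda_3\}$ as the roots of $t^2+\lambda_2t+(\lambda_2^2-\frac{S}{2})=0$ (the constant terms coming from $\lambda_2\lambda_3=\lambda_1^2-\frac{S}{2}$ and $\lambda_1\lambda_3=\lambda_2^2-\frac{S}{2}$), the requirement that these roots respect the ordering $\lambda_1<\lambda_2<\lambda_3$ gives, after squaring and using $\lambda_1,\lambda_2<0$, the bounds $\lambda_1^2\ge\frac{S}{6}$ and $\lambda_2^2\le\frac{S}{6}$, that is $\lambda_1<-\sqrt{S/6}$ and $\lambda_2>-\sqrt{S/6}$, the inequalities being strict because the principal curvatures are distinct. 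For the lower bound on $\lambda_1$ I would argue by contradiction: if $\lambda_1\le-\sqrt{S/2}$ then $\lambda_2^2+\lambda_3^2\le\frac{S}{2}$ gives $\lambda_3\le\sqrt{S/2}$, while $-\lambda_1=\lambda_2+\lambda_3\ge\sqrt{S/2}$, so $\lambda_2=(-\lambda_1)-\lambda_3\ge 0$, contradicting $\lambda_2<0$; hence $\lambda_1>-\sqrt{S/2}$.

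The only genuine step is the evaluation $C=0$ and the resulting sign $f_3>0$, since this is where both Lemma \ref{4.1.} and the maximum principle are needed. Once $\lambda_2<0$ is in hand, all of the stated inequalities are a routine discussion of the elementary symmetric functions of $\lambda_1,\lambda_2,\lambda_3$ and present no real difficulty.
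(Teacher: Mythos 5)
Your proposal is correct and takes essentially the same route as the paper: both obtain $f_3>0$ by combining the Omori-type maximum principle with the identity $\tfrac13\Delta f_3=-(S+3)f_3$ at the limit point (your explicit verification that $C=2h_{123}^2(\lambda_1+\lambda_2+\lambda_3)=0$ via Lemma 4.1 is a point the paper uses silently), and then deduce the pinching of $\lambda_1,\lambda_2$ from the constraints $\sum_i\lambda_i=0$, $\sum_i\lambda_i^2=S$. Your Vieta/root-location argument for $\lambda_1^2>\tfrac{S}{6}$, $\lambda_2^2<\tfrac{S}{6}$ and your contradiction argument for $\lambda_1>-\sqrt{S/2}$ are only cosmetic variants of the paper's use of $f_3=3\lambda_i\bigl(\lambda_i^2-\tfrac{S}{2}\bigr)$ and of $\lambda_1^2+\lambda_1\lambda_2+\lambda_2^2=\tfrac{S}{2}$.
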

\begin{proof}
Since
$\lim_{k \to \infty}sup\ \Delta f_{3}(p_{k})\leq 0$
and
$\frac{1}{3}\Delta f_{3}=-(S+3)f_{3}$,
we have
\begin{equation*}
\begin{split}
0&\geq -(S+3)\lim_{k \to \infty}\sup f_{3}(p_{k})\\
&=-(S+3)\underset{M^{3}}{\sup}\ f_{3}\\
&=-(S+3)f_{3}
\end{split}
\end{equation*}
Thus we have $f_{3}>0$.
We also notice that $\lambda_{1}<0$ and $\lambda_{3}>0.$ By direct computation, we have
\begin{equation*}
\begin{split}
f_{3}&=\lambda_{1}^{3}+\lambda_{2}^{3}+\lambda_{3}^{3}=3\lambda_{1}\lambda_{2}\lambda_{3}
=3\lambda_{i}\bigg(\lambda_{i}^{2}-\frac{1}{2}S\bigg),\ \ \ \ \forall i.
\end{split}
\end{equation*}
Hence we have $\lambda_{1}^{2}<\frac{S}{2}$ and $\lambda_{3}^{2}>\frac{S}{2}.$
Since $\sum_{i}\lambda_{i}^{2}=S$ and $\lambda_{3}^{2}>\frac{S}{2}$, we have
\begin{equation}
\label{4.3}
\lambda_{1}^{2}+\lambda_{2}^{2}<\frac{S}{2}.
\end{equation}
Since $\sum_{i}\lambda_{i}^{2}=S$ and $(\ref{4.3})$, we have
\begin{equation*}
\lambda_{1}\lambda_{2}>0.
\end{equation*}
Hence we have $\lambda_{2}<0$. Since $\lambda_{1}^{2}+\lambda_{2}^{2}+\lambda_{1}\lambda_{2}=\frac{S}{2}$ and $\lambda_{1}<\lambda_{2}<0$, we have
\begin{equation*}
\lambda_{1}^{2}>\frac{S}{6},\ \lambda_{2}^{2}<\frac{S}{6}.
\end{equation*}
Hence we have $\frac{S}{6}<\lambda_{1}^{2}<\frac{S}{2},\ 0<\lambda_{2}^{2}<\frac{S}{6}$, and $\lambda_{1}<\lambda_{2}<0$.
This proves our Lemma.
\end{proof}
\begin{lemma}
\label{4.5.}
\begin{equation*}
h_{iikk}=-\frac{1}{3}(S+3)\lambda_{i}+g_{i}\lambda_{k}+wg_{i}g_{k},
\end{equation*}
where
\begin{equation*}
g_{i}=\lambda_{i}^{2}-\frac{f_3}{S}\lambda_{i}-\frac{S}{3}.
\end{equation*}
\end{lemma}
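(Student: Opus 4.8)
The plan is to determine the nine numbers $h_{iikk}$ ($i,k\in\{1,2,3\}$) at the limit point by showing they solve an explicit linear system whose general solution is the stated expression. I would assemble three families of relations. First, differentiating the minimality relation $\sum_i h_{ii}=0$ twice in the direction $e_k$ gives $\sum_i h_{iikk}=0$ for each $k$. Second, differentiating the constancy of $S=\sum_{i,j}h_{ij}^2$ twice in $e_k$ and evaluating where $h_{ij}=\lambda_i\delta_{ij}$ gives $\sum_i\lambda_i h_{iikk}=-\sum_{i,j}h_{ijk}^2$; by Lemma \ref{4.1.} the only nonvanishing third derivative is $h_{123}$, so $\sum_{i,j}h_{ijk}^2=2h_{123}^2=\frac{1}{3}S(S+3)$ and therefore $\sum_i\lambda_i h_{iikk}=-\frac{1}{3}S(S+3)$ for each $k$.

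The third family is a commutation relation between $h_{iikk}$ and $h_{kkii}$. Using the Codazzi symmetry $h_{iik}=h_{iki}$ one rewrites $h_{iikk}=h_{ikik}$ and $h_{kkii}=h_{kiki}$, so the Ricci identity $h_{ijij}-h_{jiji}=(\lambda_i-\lambda_j)(-1+\lambda_i\lambda_j)$ from Section 2 yields $h_{iikk}-h_{kkii}=(\lambda_i-\lambda_k)(\lambda_i\lambda_k-1)$ for $i\neq k$. I expect this step to require the most care: one must justify the Codazzi reductions and correctly bookkeep the curvature terms before the clean antisymmetric form emerges; everything afterwards is linear algebra.

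Next I would solve the system. For each fixed $k$ the first two relations are inhomogeneous equations in $(h_{11kk},h_{22kk},h_{33kk})$ with particular solution $-\frac{1}{3}(S+3)\lambda_i$; since the $\lambda_i$ are distinct, the associated homogeneous space is one-dimensional and spanned by $g_i$. Indeed $g_i$ is, up to scale, the unique triple orthogonal to both $(1,1,1)$ and $(\lambda_1,\lambda_2,\lambda_3)$, which one checks from $\sum_i g_i=0$ and $\sum_i\lambda_i g_i=f_3-\frac{f_3}{S}S=0$. Hence $h_{iikk}=-\frac{1}{3}(S+3)\lambda_i+s_k g_i$ for some scalar $s_k$ attached to each column $k$.

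Finally I would feed this form into the commutation relation. Using $g_i\lambda_k-g_k\lambda_i=(\lambda_i-\lambda_k)(\lambda_i\lambda_k+\frac{S}{3})$, the right-hand side becomes $-\frac{S+3}{3}(\lambda_i-\lambda_k)+(g_i\lambda_k-g_k\lambda_i)$, and after cancellation the relation reduces to $(s_k-\lambda_k)g_i=(s_i-\lambda_i)g_k$ for all $i,k$. Since three distinct $\lambda_i$ cannot all be roots of the single quadratic defining $g$, every $g_i\neq0$, so $(s_i-\lambda_i)/g_i$ is a constant $w$ independent of $i$. Thus $s_k=\lambda_k+wg_k$, and substituting back gives $h_{iikk}=-\frac{1}{3}(S+3)\lambda_i+g_i\lambda_k+wg_ig_k$, which is the assertion.
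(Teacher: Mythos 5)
Your proposal is correct and follows essentially the same route as the paper, which merely states the two trace relations $\sum_i h_{iikk}=0$, $\sum_i\lambda_i h_{iikk}=-\frac{1}{3}S(S+3)$ and the Ricci commutation $h_{iijj}=h_{jjii}+(\lambda_i-\lambda_j)(-1+\lambda_i\lambda_j)$, then asserts that the resulting rank-$5$ linear system in the six unknowns $h_{iikk}$, $i\leq k$, has the stated one-parameter family as its solution; your write-up is in effect that computation made explicit (particular solution, homogeneous space spanned by $g$, commutation coupling the columns to leave the single parameter $w$), and your identity $g_i\lambda_k-g_k\lambda_i=(\lambda_i-\lambda_k)(\lambda_i\lambda_k+\frac{S}{3})$ checks out.

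One small slip in your last step: the observation that three distinct $\lambda_i$ cannot all be roots of the quadratic $t^2-\frac{f_3}{S}t-\frac{S}{3}$ only rules out $g_1=g_2=g_3=0$; it does not give ``every $g_i\neq 0$,'' since a quadratic has two roots and a priori one or two of the $\lambda_i$ could be among them. The repair is immediate: since $\sum_i g_i=0$, if two of the $g_i$ vanished then all three would, so at most one $g_i$ can vanish; and if exactly one vanishes, say $g_{i_0}=0$, then taking $k$ with $g_k\neq 0$ in your relation $(s_k-\lambda_k)g_{i_0}=(s_{i_0}-\lambda_{i_0})g_k$ forces $s_{i_0}=\lambda_{i_0}$, which is again of the form $s_{i_0}=\lambda_{i_0}+wg_{i_0}$ with $w$ determined by the two nonvanishing components. (In fact one can check $g_l=-\frac{3}{S}\bigl(\lambda_l^2-\frac{S}{6}\bigr)\bigl(\lambda_l^2-\frac{2S}{3}\bigr)$, and Lemma \ref{4.4.} excludes the vanishing of $g_1$ and $g_2$ but not of $g_3$, so the degenerate case does need this one-line treatment.) With that patch the argument is complete and the conclusion is unaffected.
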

\begin{proof}
Taking the derivative of $\sum_{i}h_{ii}=0$ and $\sum_{i,j}h_{ij}^{2}=S$, we have
\begin{equation*}
\sum_{i}h_{iikk}=0\ ,\ \sum_{i}h_{iikk}\lambda_{i}=-\frac{S(S+3)}{3}.
\end{equation*}
We solve this rank 5 linear system of six equations and six unknowns $h_{iikk}, i\leq k$ with
$h_{iijj}=h_{jjii}+(\lambda_{i}-\lambda_{j})(-1+\lambda_{i}\lambda_{j})$.
\end{proof}

\begin{lemma}\label{4.6.}
$$
f_5=\dfrac56Sf_3, \  f_6=\dfrac{f_3^2}3+\dfrac{S^3}4,$$
$$
 \sum_ig_i^2=\sum_ig_i\lambda_i^2=\dfrac{S^2}6-\dfrac{f_3^2}S, \  \sum_ig_i^4=\dfrac12(\dfrac{S^2}6-\dfrac{f_3^2}S)^2,
 $$
 $$
 \sum_ig_i^2\lambda_i=\dfrac{f_3^3}{S^2}-\dfrac{Sf_3}{6}, \ \sum_ig_i^2\lambda_i^2=\dfrac{S^3}{36}-\dfrac{f_3^2}{6},
\ \sum_ig_i^3\lambda_i=0.
$$
\end{lemma}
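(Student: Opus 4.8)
The plan is to reduce the entire lemma to one algebraic fact: in Case~2 the three distinct principal curvatures are the roots of a single cubic whose coefficients depend only on $S$ and $f_3$. Minimality gives $\sum_i\lambda_i=0$, the definition of $S$ gives $\sum_i\lambda_i^2=S$, and $f_3=\sum_i\lambda_i^3$; converting these power sums into elementary symmetric functions yields $e_1=0$, $e_2=-\tfrac{S}{2}$, $e_3=\tfrac{f_3}{3}$, so each $\lambda_i$ satisfies
\begin{equation*}
\lambda_i^3=\frac{S}{2}\lambda_i+\frac{f_3}{3}.
\end{equation*}
Multiplying by $\lambda_i^{k-3}$ and summing produces the recursion $p_k=\tfrac{S}{2}p_{k-2}+\tfrac{f_3}{3}p_{k-3}$ for $p_k:=\sum_i\lambda_i^k$, with $p_0=3$, $p_1=0$, $p_2=S$, $p_3=f_3$. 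Since $f_5=\sum_i\lambda_i^5=p_5$ and $f_6=\sum_i\lambda_i^6=p_6$, the first two identities drop out at once from $p_4=\tfrac{S^2}{2}$, $p_5=\tfrac56 Sf_3$ and $p_6=\tfrac{f_3^2}{3}+\tfrac{S^3}{4}$.

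For the $g_i$-identities I would first record, by direct substitution of $g_i=\lambda_i^2-\tfrac{f_3}{S}\lambda_i-\tfrac{S}{3}$ together with the power sums above, the three linear facts
\begin{equation*}
\sum_i g_i=0,\qquad \sum_i g_i\lambda_i=0,\qquad \sum_i g_i\lambda_i^2=\frac{S^2}{6}-\frac{f_3^2}{S}=:T.
\end{equation*}
The crucial simplification is then to collapse $g_i^2$ using the cubic: expanding the square and replacing $\lambda_i^3=\tfrac{S}{2}\lambda_i+\tfrac{f_3}{3}$ and $\lambda_i^4=\tfrac{S}{2}\lambda_i^2+\tfrac{f_3}{3}\lambda_i$ kills the linear term in $\lambda_i$ and leaves the exact identity (valid because the $\lambda_i$ are roots)
\begin{equation*}
g_i^2=-\frac{T}{S}\,\lambda_i^2+\frac{2T}{3}.
\end{equation*}
Everything remaining is now linear algebra against the power sums: summing gives $\sum_i g_i^2=-\tfrac{T}{S}p_2+2T=T$, which simultaneously yields the claimed equality $\sum_i g_i^2=\sum_i g_i\lambda_i^2$; multiplying by $\lambda_i$ and $\lambda_i^2$ before summing gives $\sum_i g_i^2\lambda_i=-\tfrac{T}{S}p_3=\tfrac{f_3^3}{S^2}-\tfrac{Sf_3}{6}$ and $\sum_i g_i^2\lambda_i^2=-\tfrac{T}{S}p_4+\tfrac{2T}{3}p_2=\tfrac{S^3}{36}-\tfrac{f_3^2}{6}$.

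The last two follow the same pattern. For $\sum_i g_i^3\lambda_i$ I write $g_i^3\lambda_i=(-\tfrac{T}{S}\lambda_i^2+\tfrac{2T}{3})g_i\lambda_i=-\tfrac{T}{S}g_i\lambda_i^3+\tfrac{2T}{3}g_i\lambda_i$ and reduce once more $g_i\lambda_i^3=\tfrac{S}{2}g_i\lambda_i+\tfrac{f_3}{3}g_i$; summing and invoking the two vanishing identities $\sum_i g_i=\sum_i g_i\lambda_i=0$ annihilates every term, so $\sum_i g_i^3\lambda_i=0$. For $\sum_i g_i^4$ I square the reduced form and sum, obtaining $\tfrac{T^2}{S^2}p_4-\tfrac{4T^2}{3S}p_2+\tfrac{4T^2}{3}=\tfrac{T^2}{2}=\tfrac12\bigl(\tfrac{S^2}{6}-\tfrac{f_3^2}{S}\bigr)^2$; equivalently, since $\sum_i g_i=0$ and there are only three summands, Newton's identity forces $\sum_i g_i^4=\tfrac12(\sum_i g_i^2)^2$ outright. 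The whole lemma is therefore routine once the cubic relation is available; the one step I would guard most carefully is the exact reduction of $g_i^2$, because an error in its two constants would propagate into every subsequent identity, and it is precisely this reduction that spares us from ever computing $p_7$ or $p_8$.
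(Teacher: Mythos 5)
Your proposal is correct and follows essentially the same route as the paper: both arguments rest on the fact that each $\lambda_i$ satisfies the cubic $\lambda_i^3=\frac{S}{2}\lambda_i+\frac{f_3}{3}$ (which the paper states as $f_3=3\lambda_i\bigl(\lambda_i^2-\frac{S}{2}\bigr)$ for $i=1,2,3$), reducing every claimed sum to symmetric-function algebra in $S$ and $f_3$. Your pointwise reduction $g_i^2=-\frac{T}{S}\lambda_i^2+\frac{2T}{3}$ is a tidier packaging of the paper's direct expansions, and your derivation of $\sum_i g_i^3\lambda_i=0$ parallels the paper's use of $F_3=3g_i\bigl(g_i^2-\frac{F_2}{2}\bigr)$, which likewise hinges on $\sum_i g_i=\sum_i g_i\lambda_i=0$.
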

\begin{proof}
From  $f_3=3\lambda_i(\lambda_i^2-\dfrac S2)$, for $i=1, 2, 3$,  we have
$$
f_5=\dfrac56Sf_3, \  f_6=\dfrac13f_3^2+\dfrac14S^3.
$$
From $g_{i}=\lambda_{i}^{2}-\frac{f_3}{S}\lambda_{i}-\frac{S}{3}$, we infer
$$
\sum_ig_i^2=\dfrac16S^2-\dfrac{f_3^2}S, \ \ \sum_ig_i^4=\dfrac12(\dfrac16S^2-\dfrac{f_3^2}S)^2,
$$
$$
 \sum_ig_i^2\lambda_i=\dfrac{f_3^3}{S^2}-\dfrac{Sf_3}{6},  \ \sum_ig_i^2\lambda_i^2=\dfrac{S^3}{36}-\dfrac{f_3^2}{6},
 \ \sum_ig_i\lambda_i^2=\dfrac16S^2-\dfrac{f_3^2}S,
$$
According to $F_3=3g_i(g_i^2-\dfrac{F_2}{2})$, for $i=1, 2, 3$, we have
$$
\ \sum_ig_i^3\lambda_i=0,
$$
where $F_k=\sum_ig_i^k$.
\end{proof}

\begin{lemma}
\label{4.7.}
\begin{equation*}
y=\bigg(\frac{1}{3}+\frac{1}{S}\bigg)f_{3}\pm{\bigg[\frac{f_{3}^{2}}{S^{2}}\bigg(\frac{19}{9}S^{2}+\frac{8}{3}S+1\bigg)+\frac{7}{9}S(S+6)\bigg(S+\frac{15}{7}\bigg)\bigg]^{\frac{1}{2}}},
\end{equation*}
where
\begin{equation*}
y=\bigg(\frac{S^{2}}6-\frac{f_{3}^{2}}S\bigg)w.
\end{equation*}
\end{lemma}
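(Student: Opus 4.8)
The plan is to substitute the explicit expression for $h_{iikk}$ from Lemma~\ref{4.5.} into the quadratic constraint of Lemma~\ref{4.3.} and reduce the whole relation to a single quadratic equation, which, written in terms of $y=(\frac{S^2}{6}-\frac{f_3^2}{S})w$, is then solved by the quadratic formula. Setting $P_{ik}=h_{iikk}=-\frac13(S+3)\lambda_i+g_i\lambda_k+wg_ig_k$, the first step is to rewrite the left-hand side of Lemma~\ref{4.3.} as $3\sum_{i,k}P_{ik}^2-2\sum_iP_{ii}^2$. This is the convenient form, since both the unrestricted double sum and the diagonal sum are easy to compute, whereas the restricted sum $\sum_{i\neq k}$ is not.

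Next I would expand $\sum_{i,k}P_{ik}^2$. Each cross term factors into a product of an $i$-sum and a $k$-sum, and every $k$-sum that appears, namely $\sum_k\lambda_k$, $\sum_kg_k$ and $\sum_k\lambda_kg_k$, vanishes: the first because $\sum_i\lambda_i=0$, the second because $\sum_ig_i=0$, and the third by a direct computation from the definition of $g_i$. Only three diagonal pieces survive, and they are evaluated using $\sum_i\lambda_i^2=S$ and $\sum_ig_i^2=\frac{S^2}{6}-\frac{f_3^2}{S}$ from Lemma~\ref{4.6.}.

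For the diagonal sum $\sum_iP_{ii}^2$ nothing cancels, but every term is supplied directly by Lemma~\ref{4.6.}: the values of $\sum_ig_i^2\lambda_i^2$, $\sum_ig_i^4$, $\sum_ig_i\lambda_i^2$ and $\sum_ig_i^2\lambda_i$, together with $\sum_ig_i^3\lambda_i=0$, evaluate it completely; in particular $\sum_ig_i^4=\frac12(\frac{S^2}{6}-\frac{f_3^2}{S})^2$ is what makes the combined $w^2$-terms reduce to $2y^2$. Assembling $3\sum_{i,k}P_{ik}^2-2\sum_iP_{ii}^2=3S(S+3)^2$ produces a quadratic in $w$. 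The step that makes the result clean is the identity $\frac{f_3^3}{S^2}-\frac{Sf_3}{6}=-\frac{f_3}{S}(\frac{S^2}{6}-\frac{f_3^2}{S})$, which turns the linear-in-$w$ coefficient into $-\frac{4(S+3)f_3}{3S}\,y$. Together with the $2y^2$ term this exhibits the constraint as a genuine quadratic $2y^2-\frac{4(S+3)f_3}{3S}y+K=0$ in $y$, with $K$ a rational function of $S$ and $f_3$ only.

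The quadratic formula gives $y=\frac{(S+3)f_3}{3S}\pm\sqrt{D}$, and since $\frac{(S+3)f_3}{3S}=(\frac13+\frac1S)f_3$ the linear part already matches the statement. The main obstacle is to confirm that the discriminant $D$ collapses to exactly the claimed closed form: one must check that its $f_3^2$-dependent part equals $\frac{f_3^2}{S^2}(\frac{19}{9}S^2+\frac83S+1)$ and that the remaining polynomial in $S$, namely $\frac79S^3+\frac{19}{3}S^2+10S$, factors as $\frac79S(S+6)(S+\frac{15}{7})$. This is routine but bookkeeping-heavy algebra; that cubic factorization is the only nonobvious simplification, and it is precisely what produces the factors $(S+6)$ and $(S+\frac{15}{7})$ in the stated expression.
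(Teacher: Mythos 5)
Your proposal is correct and follows essentially the same route as the paper: substituting Lemma \ref{4.5.} into Lemma \ref{4.3.}, evaluating the sums via the moment identities of Lemma \ref{4.6.} (the paper computes $\sum_{i\neq k}h_{iikk}^2$ as $\sum_{i,k}h_{iikk}^2-\sum_i h_{iiii}^2$, which amounts to exactly your rewriting $3\sum_{i,k}P_{ik}^2-2\sum_i P_{ii}^2$), and solving the resulting quadratic in $y$. I checked the bookkeeping you flagged: the cross terms do vanish since $\sum_i\lambda_i=\sum_ig_i=\sum_i\lambda_ig_i=0$, the $w$-linear coefficient is $-\frac{4(S+3)f_3}{3S}$ via $\frac{f_3^3}{S^2}-\frac{Sf_3}{6}=-\frac{f_3}{S}\bigl(\frac{S^2}{6}-\frac{f_3^2}{S}\bigr)$, and the discriminant collapses to $\frac{f_3^2}{S^2}\bigl(\frac{19}{9}S^2+\frac{8}{3}S+1\bigr)+\frac{7}{9}S^3+\frac{19}{3}S^2+10S$ with the stated cubic factorization, as claimed.
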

\begin{proof}
By using the Lemma $\ref{4.5.}$ and $\ref{4.6.}$, we have
\begin{equation}
\label{4.4}
\begin{aligned}
&\sum_{i,k}h_{iikk}^{2}=\sum_{i,k}\bigl(-\frac{1}{3}(S+3)\lambda_{i}+g_{i}\lambda_{k}+wg_{i}g_{k}\bigl)^2\\
&=\frac{1}{3}S(S+3)^{2}+S\bigg(\frac{S^{2}}6-\frac{f_{3}^{2}}S\bigg)+w^{2}\bigg(\frac{S^{2}}6-\frac{f_3^{2}}S\bigg)^{2},
\end{aligned}
\end{equation}

\begin{equation}
\begin{aligned}
&\sum_{i}h_{iiii}^{2}=\sum_i(-\frac{1}{3}(S+3)\lambda_{i}+g_{i}\lambda_{i}+wg_{i}^2)^2\\
&=\frac{1}{9}(S+3)^2S+\sum_ig_{i}^2\lambda_{i}^2+w^2\sum_ig_{i}^4+2w\sum_i\lambda_ig_i^3\\
&-\dfrac23(S+3)\sum_ig_i\lambda_i^2
-\dfrac23(S+3)w\sum_ig_i^2\lambda_i\\
&=\frac{1}{9}S(S+3)^{2}+\dfrac{S}6\bigg(\frac{S^{2}}6-\frac{f_{3}^{2}}S\bigg)+w^{2}\dfrac12\bigg(\frac{S^{2}}6-\dfrac{f^{2}}S\bigg)^{2}\\
&-\dfrac23(S+3)\bigg(\frac{S^{2}}6-\frac{f_{3}^{2}}S\bigg)-\dfrac23(S+3)w\bigg(\dfrac{f_3^3}{S^2}-\dfrac{Sf_3}{6}\bigg),
\end{aligned}
\end{equation}

\begin{equation}
\label{4.5}
\begin{split}
\sum_{i\neq k}h_{iikk}^{2}&=\sum_{i,k}h_{iikk}^{2}-\sum_{i}h_{iiii}^{2}\\
&
=\frac{2}{9}S(S+3)^{2}+\bigg[\frac{5}{36}S^{3}-\frac{5}{6}f_{3}^{2}\bigg]+\frac{2}{3}(S+3)\bigg(\frac{1}{6}S^{2}-\frac{f_{3}^{2}}{S}\bigg)\\
&+\frac{2}{3}w(S+3)\bigg(-\frac{1}{6}Sf_{3}+\frac{f_{3}^{3}}{S^{2}}\bigg)+\frac{1}{2}w^{2}\bigg[\frac{1}{36}S^{4}-\frac{1}{3}Sf_{3}^{2}+\frac{f_{3}^{4}}{S^{2}}\bigg].
\end{split}
\end{equation}
By substituting equation $(\ref{4.4})$ and $(\ref{4.5})$ into Lemma $\ref{4.3.}$, we prove this Lemma.
\end{proof}
\begin{lemma}
\label{4.7.}
\begin{equation*}
-\frac{3}{S}y\bigg(\lambda_{l}^{2}-\frac{S}{6}\bigg)\bigg(\lambda_{l}^{2}-\frac{2S}{3}\bigg)\leq\lambda_{l}\bigg(\lambda_{l}^{2}-\frac{S}{6}\bigg)\bigg(\frac{9}{S}\lambda_{l}^{4}-\frac{15}{2}\lambda_{l}^{2}+2S+3\bigg).
\end{equation*}
\end{lemma}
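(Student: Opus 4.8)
The plan is to read the claimed inequality off the third conclusion of the Generalized Maximum Principle applied to $f_3$ at the limit point. First I would compute the covariant Hessian of $f_3=\sum_i\lambda_i^3$ in the diagonalizing frame. Differentiating $f_3=\sum_{i,j,k}h_{ij}h_{jk}h_{ki}$ twice and evaluating where $h_{ij}=\lambda_i\delta_{ij}$ gives
$$\nabla_l\nabla_lf_3=3\sum_ih_{iill}\lambda_i^2+3\sum_{i,j}h_{ijl}^2(\lambda_i+\lambda_j),$$
and the Generalized Maximum Principle forces $\nabla_l\nabla_lf_3\le 0$ at the limit point for each $l$. The strategy is then to reduce each of the two sums to an explicit polynomial in $\lambda_l$ and rearrange.

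For the third-derivative sum, Lemma \ref{4.1.} shows that the only nonzero component is $h_{123}$, so for fixed $l$ only the index pair $\{i,j\}$ complementary to $l$ contributes; using $\sum_i\lambda_i=0$ and $h_{123}^2=\tfrac{S(S+3)}{6}$ one obtains $\sum_{i,j}h_{ijl}^2(\lambda_i+\lambda_j)=-\tfrac{S(S+3)}{3}\lambda_l$. For the fourth-derivative sum I substitute the expression for $h_{iill}$ from Lemma \ref{4.5.} and apply the identities of Lemma \ref{4.6.} (in particular $\sum_ig_i\lambda_i^2=\tfrac{S^2}{6}-\tfrac{f_3^2}{S}$ together with $\sum_i\lambda_i^3=f_3$) to get
$$\sum_ih_{iill}\lambda_i^2=-\tfrac13(S+3)f_3+\lambda_l\Big(\tfrac{S^2}{6}-\tfrac{f_3^2}{S}\Big)+g_ly,$$
where $y=(\tfrac{S^2}{6}-\tfrac{f_3^2}{S})w$ as in the preceding lemma.

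The decisive algebraic step, which I expect to be where the real content lies, is to eliminate $f_3$ pointwise in favor of $\lambda_l$. Lemma \ref{4.4.} records $f_3=3\lambda_l(\lambda_l^2-\tfrac S2)$ for every index $l$; substituting this into $g_l=\lambda_l^2-\tfrac{f_3}{S}\lambda_l-\tfrac S3$ and factoring the resulting biquadratic in $\lambda_l$ yields
$$g_l=-\tfrac3S\Big(\lambda_l^2-\tfrac S6\Big)\Big(\lambda_l^2-\tfrac{2S}{3}\Big),$$
so that $g_ly$ is exactly the left-hand side of the claim. Combining $\nabla_l\nabla_lf_3\le 0$ with the two reductions gives $g_ly\le \tfrac13(S+3)f_3-\lambda_l\big(\tfrac{S^2}{6}-\tfrac{f_3^2}{S}\big)+\tfrac{S(S+3)}{3}\lambda_l$. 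Replacing $f_3$ by $3\lambda_l^3-\tfrac{3S}{2}\lambda_l$ once more and collecting powers of $\lambda_l$, the right-hand side collapses to $\lambda_l\big(\lambda_l^2-\tfrac S6\big)\big(\tfrac9S\lambda_l^4-\tfrac{15}{2}\lambda_l^2+2S+3\big)$, which is precisely the asserted inequality. The only genuine effort is the routine but lengthy polynomial bookkeeping in these final substitutions; conceptually everything rests on recognizing the factorization of $g_l$ and that the per-point relation $f_3=3\lambda_l(\lambda_l^2-S/2)$ turns the whole expression into a single-variable polynomial in $\lambda_l$.
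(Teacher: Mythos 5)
Your proposal is correct and is essentially the paper's own proof: both apply the Generalized Maximum Principle to $f_3$, reduce $\nabla_l\nabla_l f_3\le 0$ via Lemmas \ref{4.1.}, \ref{4.5.}, and \ref{4.6.} to $\bigl(\lambda_l^2-\tfrac{f_3}{S}\lambda_l-\tfrac S3\bigr)y\le\lambda_l\bigl[(S+3)\bigl(\lambda_l^2-\tfrac S2\bigr)-\bigl(\tfrac{S^2}{6}-\tfrac{f_3^2}{S}\bigr)+\tfrac{S(S+3)}{3}\bigr]$, and then eliminate $f_3$ using $f_3=3\lambda_l\bigl(\lambda_l^2-\tfrac S2\bigr)$. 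The only difference is that you make explicit the factorization $g_l=-\tfrac3S\bigl(\lambda_l^2-\tfrac S6\bigr)\bigl(\lambda_l^2-\tfrac{2S}{3}\bigr)$ and the collapse of the right-hand side, which the paper compresses into ``by direct computation''; both steps check out.
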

\begin{proof}
Since
\begin{equation*}
\frac{1}{3}(f_{3})_{ll}
=\sum_{i}h_{iill}\lambda_{i}^{2}+2\sum_{i,j}h_{ijl}^{2}\lambda_{i},
\end{equation*}
we have
\begin{equation}
\label{4.6}
\begin{split}
0&\geq \frac{1}{3}\lim_{k \to \infty}sup\ (f_{3})_{ll}\\
&=\frac{1}{3}\lim_{k \to \infty}(f_{3})_{ll}\\
&=\sum_{i}h_{iill}\lambda_{i}^{2}+2\sum_{i,j}h_{ijl}^{2}\lambda_{i}.
\end{split}
\end{equation}
By direct computation, we have
\begin{equation}
\label{4.7}
\sum_{i}h_{iill}\lambda_{i}^{2}=-\lambda_{l}(S+3)\bigg(\lambda_{l}^{2}-\frac{1}{2}S\bigg)+\lambda_{l}\bigg(\frac{1}{6}S^{2}-\frac{f_{3}^{2}}{S}\bigg)+\bigg(\lambda_{l}^{2}-\frac{f_{3}}{S}\lambda_{l}-\frac{S}{3}\bigg)y,
\end{equation}
\begin{equation}
\label{4.8}
2\sum_{i,j}h_{ijl}^{2}\lambda_{i}=-\frac{S(S+3)}{3}\lambda_{l}.
\end{equation}
By substituting equation $(\ref{4.7})$ and $(\ref{4.8})$ into $(\ref{4.6})$, we have
\begin{equation*}
\bigg(\lambda_{l}^{2}-\frac{f_{3}}{S}\lambda_{l}-\frac{S}{3}\bigg)y
\leq\lambda_{l}\bigg[(S+3)\bigg(\lambda_{l}^{2}-\frac{1}{2}S\bigg)-\bigg(\frac{1}{6}S^{2}-\frac{f_{3}^{2}}{S}\bigg)+\frac{S(S+3)}{3}\bigg]
\end{equation*}
By direct computation, we complete this Lemma.
\end{proof}
If $y\geq 0$, by using the Lemma $\ref{4.6.}$, we have
\begin{equation}
\label{4.9}
\begin{split}
y&=\bigg(\frac{1}{3}+\frac{1}{S}\bigg)f_{3}+{\bigg[\frac{f_{3}^{2}}{S^{2}}\bigg(\frac{19}{9}S^{2}+\frac{8}{3}S+1\bigg)+\frac{7}{9}S(S+6)\bigg(S+\frac{15}{7}\bigg)\bigg]^{\frac{1}{2}}},\\
&>\bigg(\frac{1}{3}+\frac{1}{S}\bigg)f_{3}+\bigg[\frac{f_{3}^{2}}{S^{2}}\bigg(\frac{4}{3}S+1\bigg)^{2}\bigg]^{\frac{1}{2}}\\
&=\bigg(\frac{5}{3}+\frac{2}{S}\bigg)f_{3}.
\end{split}
\end{equation}
By substituting $l=1$ and $(\ref{4.9})$ into Lemma $\ref{4.7.}$, we have
\begin{equation}
\label{4.10}
\bigg(\frac{24}{S}+\frac{18}{S^{2}}\bigg)\lambda_{1}^{4}-\bigg(25+\frac{21}{S}\bigg)\lambda_{1}^{2}+7S+9<0.
\end{equation}
We notice that the left hand side of equation $(\ref{4.10})$ is an increasing function. Hence substituting $\lambda_{1}^{2}=\frac{S}{6}$ into $(\ref{4.10})$, we have
\begin{equation*}
S<-\frac{12}{7}.
\end{equation*}
It is a contradiction.\par
If $y<0$, by substituting $l=2$ into Lemma $\ref{4.7.}$, we have
\begin{equation}
\label{4.11}
\frac{3}{S}y\bigg(\lambda_{2}^{2}-\frac{2S}{3}\bigg)+\lambda_{2}\bigg(\frac{9}{S}\lambda_{2}^{4}-\frac{15}{2}\lambda_{2}^{2}+2S+3\bigg)\leq 0.
\end{equation}
By substituting
\begin{equation*}
y=\bigg(\frac{1}{3}+\frac{1}{S}\bigg)f_{3}-{\bigg[\frac{f_{3}^{2}}{S^{2}}\bigg(\frac{19}{9}S^{2}+\frac{8}{3}S+1\bigg)+\frac{7}{9}S(S+6)\bigg(S+\frac{15}{7}\bigg)\bigg]^{\frac{1}{2}}}
\end{equation*}
into $(\ref{4.11})$, we notice that the left hand side
of equation $(\ref{4.11})$ is an increasing function. Hence substituting $\lambda_{2}=-\sqrt{\frac{S}{6}}$ into $(\ref{4.11})$, we have
\begin{equation*}
\mbox{LHS of (\ref{4.11})}>0.
\end{equation*}
It is a contradiction.
Hence we have finished the proof of   theorem 1.1.

\bibliographystyle{amsplain}

\begin{thebibliography}{99}
\bibitem{c}
Chang S. P., On minimal hypersurfaces with constant
scalar curvature in $S^4$, J. Diff. Geom., \textbf{37}(1993),
523-534.

\bibitem{cw}
Cheng, Q. M. and Wan, Q. R.,  Complete hypersurfaces of $\mathbb R^4$ with
constant mean curvature, Monatshefte fur Math.  \textbf{118} (1994), 171-204


\bibitem {cdk} Chern S. S. do Carmo M.  \& Kobayashi S.,
Minimal submanifolds of a sphere with second fundamental form of
constant length, Functioal Analysis and Related Fields,
Springer-Verlag,  Berlin, 1970, pp. 59-75


\bibitem {l}  Lawson H. B. Jr., Local rigidity theorems for
minimal hypersurfaces,  Ann. of  Math., \textbf{89} (1969),
167-179.

\bibitem {o} Omori, H., Isometric immersions of Riemannian manifolds,
 J. Math. Soc. Japan  \textbf{19} (1967), 205-214.



\bibitem {pt}  Peng C. K. \& Terng C. L., Minimal
hypersurfaces of spheres with constant scalar curvature, in ``Seminar on minimal submanifolds'', Princeton Univ. Press,
Princeton, 1983, pp. 179-198.

\bibitem {pt1}  Peng C. K. \& Terng C. L., The scalar curvature of
minimal hypersurfaces in sphere, Math. Ann., \textbf{266}(1983),
105-113.
\bibitem {s} Simons J., Minimal varieties
in Riemannian manifolds,  Ann. of Math., \textbf{ 88}(1968),
62-105.



\bibitem {yc1} Yang H. C. \& Cheng Q.-M., A note on the pinching constant
of minimal hypersurfaces with constant scalar curvature in the unit sphere, Chinese Science Bull., \textbf{36}(1991), 1-6.

\bibitem {yc2} Yang H. C. \& Cheng Q.-M., An estimate of the pinching constant of
minimal hypersurfaces with constant scalar curvature in the unit sphere, Manuscripta Math., \textbf{84}(1994), 89-100.

\bibitem {yc3} Yang H. C. \& Cheng Q.-M., Chern's conjecture on
minimal hypersurfaces, Math. Z., \textbf{227}(1998), 377-390.

\bibitem {y} Yau, S. T., Harmonic functions on complete Riemannian manifolds,
Comm. Pure and Appl. Math., \textbf{28} (1975), 201-228.


\end{thebibliography}

\end{document}